\def\widebar{\accentset{{\cc@style\underline{\mskip14mu}}}}
\newcommand{\R}{{\mathbb R}}
\newcommand{\be}{\begin{eqnarray}}
\newcommand{\ben}{\begin{eqnarray*}}
\newcommand{\en}{\end{eqnarray}}
\newcommand{\enn}{\end{eqnarray*}}
\newcommand{\pa}{\partial}
\newcommand{\g}{\gamma}
\newcommand{\wih}{\widehat}
\newcommand{\wit}{\widetilde}
\newcommand{\Calderon}{Calder\'{o}n}
\newcommand{\bds}[1]{\boldsymbol{#1}}
\newtheorem{theorem}{Theorem}[section]
\newtheorem{remark}{Remark}[section]
\newtheorem{prop}[theorem]{Proposition}
\definecolor{hw}{rgb}{1,0,0}
\definecolor{lk}{rgb}{0,0,1}
\begin{document}
\renewcommand{\theequation}{\arabic{section}.\arabic{equation}}
\title{{Learning-Enhanced Variational Regularization for Electrical Impedance Tomography via \Calderon's Method}%: Learning Regularizer from \Calderon's Method}
\thanks{The work of ZZ is supported by National Natural Science Foundation of China (Projects 12422117 and 12426312), Hong Kong Research Grants Council (15303122) and an internal grant of Hong Kong Polytechnic University (Project ID: P0038888, Work Programme: ZVX3). The work of KS is supported by the National Research Foundation of Korea(NRF) grant funded by the Korea government(MSIT)(RS-2024-00350215) }}
\date{}

\author{Kai Li\thanks{Department of Applied Mathematics, The Hong Kong Polytechnic University, Hung Hom, Hong Kong. (\texttt{kai1li@polyu.edu.hk}, \texttt{zhi.zhou@polyu.edu.hk}).}\and Kwancheol Shin\thanks{Institute of Mathematical Sciences, Ewha Womans University, 52, Ewhayeodae-gil, Seodaemun-gu, Seoul
03760, Republic of Korea (\texttt{kcshin3623@gmail.com})} \and Zhi Zhou\footnotemark[2]}

\maketitle

\begin{abstract}
This paper aims to numerically solve the two-dimensional electrical impedance tomography (EIT) with Cauchy data.
This inverse problem is highly challenging due to its severe ill-posed nature and strong nonlinearity, which necessitates appropriate regularization strategies. 
Choosing a regularization approach that effectively incorporates the \textit{a priori} information of the conductivity distribution (or its contrast) is therefore essential. 
In this work, we propose a deep learning-based method to capture the \textit{a priori} information about the shape and location of the unknown contrast using \Calderon's method. 
The learned \textit{a priori} information is then used to construct the regularization functional of the variational regularization method for solving the inverse problem. The resulting regularized variational problem for EIT reconstruction is then solved using the Gauss-Newton method. Extensive numerical experiments demonstrate that the proposed inversion algorithm achieves accurate reconstruction results, even in high-contrast cases, and exhibits strong generalization capabilities. Additionally, some stability and convergence analysis of the variational regularization method underscores the importance of incorporating \textit{a priori} information about the support of the unknown contrast.  

\vspace{.2in}

{\bf Keywords:} Electrical impedance tomography, high contrast, \Calderon's method, Gauss-Newton method, variational regularization, deep learning.
\end{abstract}

\section{Introduction}
\setcounter{equation}{0}
This paper is concerned with the inverse problem of electrical impedance tomography (EIT) in two dimensions.
This type of problem arises in various applications, such as medical imaging, geophysics, environmental sciences, and non-destructive testing (see, e.g., \cite{CMI99, BL02, AAG15}).

It is well known that the EIT problem is strongly nonlinear and severely ill-posed.
A variety of numerical reconstruction methods, including iterative and non-iterative approaches, have been proposed to recover the conductivity distribution (or its contrast) from a knowledge of the Cauchy data, with different regularization strategies.
For example, a sparse reconstruction algorithm was proposed in \cite{JBK12} to recover the conductivity distribution from the boundary measurements. 
This method adopted a Tikhonov functional that incorporates a sparsity-promoting $ l_1 $-penalty term, and an iterative algorithm of soft shrinkage type was developed to solve the EIT problem.
The work in \cite{IMR14} studied the Gauss-Newton algorithm for EIT reconstruction with various \textit{a priori} information.
A variational method with total variation regularization was proposed in \cite{HMK18} for EIT reconstruction from one boundary measurement, with rigorous stability and convergence analysis.
In \cite{AGS16}, the authors studied theoretical advantages of the linearized inverse problem of multifrequency EIT under several practical settings, where the contrast of the conductivity distribution is assumed to be sufficiently small to enable the linearization.
Then an efficient group sparse reconstruction algorithm of iterative type was proposed in \cite{AGS16} to solve the linearized EIT problem.
A Gauss-Newton-type method with inexact relaxed steps was introduced in \cite{JJK20} for EIT reconstruction.
For a comprehensive account of various inversion methods for the EIT problem, see \cite{CMI99, BL02, AAG15, IKJ15} and the references quoted therein.
It should be noted that these iterative algorithms sometimes struggle to achieve satisfactory reconstructions for conductivity distribution of high contrast.
One possible reason is that such methods may not be equipped with appropriate regularization strategies, which are typically challenging to choose in practical applications \cite{BL02, ASM19}.
See the monographs \cite{EHW96, KBN08} and the references quoted therein for a comprehensive discussion of various regularization methods of ill-posed and inverse problems.

Recently, non-iterative algorithms have attracted considerable attention in the field of EIT, as they avoid the need for solving the forward problem iteratively and are therefore computationally efficient. 
As a pioneering work, in his seminal paper \cite{CAP80}, \Calderon\ proved that the linearized EIT problem admits a unique solution under the assumption of small contrast, and also proposed a direct reconstruction method. 
We refer to the method in \cite{CAP80} as \Calderon's method for EIT. 
The applicability of \Calderon's method to experimental EIT data, such as data collected from saline-filled tanks or human subjects, was not demonstrated until \cite{BJM08}. 
\Calderon's approach employs the Fourier transform and special complex harmonic functions, now known as \textit{complex geometrical optics} solutions (CGO). 
The adoption of the CGO solutions and the discovery of the property of the completeness of the products for harmonic functions in $\R^n\; (n\geq 2)$ have yielded significant results, especially in the theory of inverse problems.
By employing CGO solutions, the global uniqueness of the EIT problem was established in \cite{SJU87} for spatial dimensions $n \geq 3$. 
For the two-dimensional case, global uniqueness was proved in \cite{NAI96}, where the proof led to the development of the D-bar method \cite{SSM00, MJL03}. 
The D-bar method is a direct reconstruction technique that solves the full nonlinear inverse problem using CGO solutions and a non-physical scattering transform, i.e., a nonlinear Fourier transform. 
For a comprehensive account of the D-bar method and its recent developments, see \cite{MJL12, MJL20}.
The connection between \Calderon's method and the D-bar method was elucidated in \cite{KKL07, SKM20}, where it was demonstrated that \Calderon's method can be interpreted as a three-step linearized version of the D-bar method. 
Implementation of \Calderon's method for three-dimensional imaging with simulated data can be found in \cite{SKA21}, and the comparison to the Gauss-Newton method for simulated human chest data was done in \cite{SKA24}. 
Reconstruction for two-dimensional anisotropic conductivity with \textit{a priori} information on the anisotropic tensor was performed in \cite{MRL20}.
The direct sampling method was introduced in \cite{CYT14} for locating the support of the contrast of the conductivity distribution. 
Although non-iterative algorithms can efficiently recover the shape and location of the unknown conductivity distribution, they often suffer from low resolution and are generally less accurate than iterative algorithms.
In this paper, we propose to combine the strengths of iterative algorithms and non-iterative algorithms via a deep learning approach, thereby enhancing the reconstruction performance.

To address the ill-posedness encountered in numerically solving the EIT problem, reconstruction schemes typically require regularization strategies to constrain the solution space and ensure stable conductivity recovery.
The effective incorporation of \textit{a priori} information enhances various inversion schemes through regularization, simultaneously mitigating ill-posedness and improving image resolution. 
For example, many effective inversion algorithms for the EIT problem have been developed, incorporating \textit{a priori} information such as the smallness of conductivity anomalies or the dependence of material parameters on frequency.  
We refer to \cite{AHK04, AHK07, AHB14, AGS16} and the references quoted therein for further details.
The idea of incorporating efficient \textit{a priori} information has been successfully applied in both iterative methods \cite{DDC94, ANJ95, VMV98, BUE98, DHB99, KJP99, FTD10, FDG12} and non-iterative methods \cite{AMM16, AMH17, AMM18, SKM20, SKM21} for solving the EIT problem.
However, selecting appropriate \textit{a priori} information remains challenging, as it requires precise characterization and explicit mathematical encoding of solution-specific prior knowledge specific to the application context of the inverse problem.
Instead of manual selection, in this work, we aim to use a deep neural network to automatically extract the \textit{a priori} information from ground truth data.

In recent years, there has been a growing trend of utilizing deep learning and convolutional neural networks (CNNs) for developing effective EIT reconstruction methods (see, e.g., \cite{HSJ18, FYY20, WZL19, GRJ21, CSJ23, SBZ23, LDW23, ZBZ25}).
For example, \cite{FYY20} introduced a deep neural network for solving the EIT problem under the small contrast assumption of the conductivity distribution. 
The deep neural network in \cite{FYY20} was carefully designed by studying the inherent low-rank structure of the EIT problem and was trained to invert the unknown conductivity distribution from the Dirichlet-to-Neumann (DtN) map.
In \cite{HSJ18, WZL19, GRJ21, CSJ23, SBZ23}, the authors first obtained the initial contrasts of the conductivity distribution by non-CNN-based methods, and then employed the well-trained CNNs for further refinement.
Such approaches are also categorized as post-processing methods \cite{ASM19}.
It is worth noting that \cite{CSJ23, SBZ23} used \Calderon's method to generate an initial guess for the unknown conductivity distribution, which is a non-iterative algorithm as we mentioned before, ensuring computational efficiency. 
In \cite{CMM21}, a deep neural network was trained to learn the mapping between scattering transforms in the D-bar method and the internal boundaries of anatomical structures of the thorax. 
The boundary information inferred by the trained network was then fused with the D-bar method to enhance image resolution.
In \cite{LDW23}, an untrained neural network approach was proposed for EIT reconstruction by leveraging the deep image prior (DIP), where the network architecture itself serves as an implicit regularizer to recover the unknown conductivity distribution without training data.
\cite{ZBZ25} proposed a two-stage learning-based method for solving the inverse problem of EIT.
The first stage used a well-designed neural network to obtain an initial guess from the boundary measurements, while the second stage unrolled the faster iterative shrinkage thresholding algorithm (FISTA) into a deep neural network with attention mechanisms for final reconstruction. For a comprehensive review of deep learning-based approaches to EIT and other inverse problems, see \cite{TDN23, ASM19, HMN23, LS22}.

In this paper, we propose a learning-enhanced variational regularization method via \Calderon's method (\textbf{LEVR-C}) that incorporates the \textit{a priori} information of the shape and location (i.e., the support) of the unknown contrast, which is learned by a deep neural network from \Calderon's method, as a regularization strategy for recovering the contrast of the conductivity distribution.
Precisely, we first train a deep neural network to retrieve the \textit{a priori} information of the support of the unknown contrast from \Calderon's method.
We subsequently develop a variational regularization method equipped with the learned \textit{a priori} information for EIT reconstruction, where the proposed variational formulation is then solved by the Gauss-Newton method.
Some stability and convergence analysis of the proposed variational regularization method are also provided in this paper.
Extensive numerical experiments demonstrate that our algorithm has a satisfactory reconstruction performance, stable convergence behavior, and good generalization ability.

This paper is organized as follows. 
Section \ref{S2} presents the mathematical formulation of the inverse problem of EIT.
Based on \Calderon's method, we develop a deep learning approach that can effectively extract the \textit{a priori} information of the shape and location (i.e., the support) of the unknown contrast in Section \ref{S3}.
In Section \ref{S4}, we propose a variational regularization method that effectively incorporates the learned \textit{a priori} information for the inverse problem of EIT.
This variational problem is then solved with the Gauss-Newton method for EIT reconstruction.
It should be noted that the stability and convergence analysis of our regularization method validates the important role played by the \textit{a priori} information of support of the unknown contrast.
Section \ref{S5} presents numerical experiments that demonstrate the effectiveness of the proposed regularization method.
Final conclusions and future directions are discussed in Section \ref{S6}.

\section{Problem formulation}\label{S2}
\setcounter{equation}{0}

This section introduces the forward and inverse problems of the EIT in the plane. Precisely, let the region under consideration be a bounded domain $\Omega\subset\R^2$, which contains a piecewise smooth conductivity distribution $ \sigma(x)\in L^\infty(\Omega) $ with $ \sigma>0 $. 
We assume that $\sigma(x) = m(x) + 1 $, where $m(x)$ is the contrast of the conductivity distribution from the background conductivity 1, and $m(x)$ is assumed to be compactly supported in $ \Omega $, i.e., $ \mathrm{supp}(m)\subset \Omega$. 
Then the potential $ u\in H^1(\Omega) $ in the EIT model satisfies the following equation \cite{BL02}
\begin{equation}\label{21}
\left\{ \begin{array}{ll}
-\nabla\cdot (\sigma\nabla u) = 0 & \text{in}\; \Omega,\\
\sigma\dfrac{\pa u}{\pa \nu} = g, & \text{on}\; \pa \Omega,
\end{array} \right.
\end{equation}
where the Neumann boundary data $ g\in H^{-1/2}(\pa \Omega) $ represents the current pattern imposed on the boundary, and $\nu$ is the unit outward normal vector to $ \pa \Omega $.
To ensure the solvability of the forward problem, we require $ \int_{\pa \Omega}gds = 0 $ (Kirchhoff's law), i.e.,
\begin{equation}\label{22}
g\in \wit{H}^{-1/2}(\pa \Omega) = \{v\in H^{-1/2}(\pa \Omega): \int_{\pa \Omega}vds = 0\},
\end{equation}
and we enforce $ \int_{\pa \Omega}uds = 0 $ to guarantee uniqueness of the solution, i.e.,
\begin{equation}\label{23}
u\in \wit{H}^1(\Omega) = \{v\in H^1(\Omega): \int_{\pa \Omega}vds = 0\}.
\end{equation}
The condition $ \int_{\pa \Omega}uds = 0 $ imposes a grounding condition for the electrical potential $ u $.
For a given conductivity distribution $ \sigma $, we denote by $ \Lambda_\sigma $ the Neumann-to-Dirichlet (NtD) map $ \Lambda_\sigma: H^{-1/2}(\pa \Omega)\to H^{1/2}(\pa \Omega) $ given by
\begin{equation}\label{24}
\Lambda_\sigma g = u|_{\pa \Omega}.
\end{equation}
For the simplicity of notation, we write the potential distribution on the boundary $\partial \Omega$ as $f:=u|_{\partial \Omega}$, which also represents the Dirichlet boundary data of $u$. Correspondingly, we denote by $ (\Lambda_\sigma)^{-1} $ the Dirichlet-to-Neumann (DtN) map. The forward problem of EIT is to compute the NtD map $ \Lambda_\sigma $ for a given conductivity distribution $ \sigma $.
Conversely, the inverse problem of EIT aims to reconstruct the unknown conductivity distribution $ \sigma $ (i.e., its contrast $ m=\sigma-1 $) given its corresponding NtD map $ \Lambda_\sigma $.
Theoretically, the conductivity distribution $\sigma$ as a positive $ L^\infty $ function can be recovered from the NtD map $ \Lambda_\sigma $ \cite{AKP06}.
However, in practical applications, we do not have full knowledge of the NtD map $ \Lambda_\sigma $ \cite{BL02}. 
Instead, we measure voltage $ f $ (Dirichlet boundary data) induced by applied current pattern $g$ (Neumann boundary data), yielding a set of the Cauchy data pairs $ \{(g_q, f_q)\}_{q=1}^Q $ for the systems \eqref{21}--\eqref{23}.
This paper considers the following inverse problem.

\textbf{EIT problem}: Reconstruct the conductivity contrast $m$ from measured Cauchy data pairs $ \{(g_q, f_q)\}_{q=1}^Q $.

To mathematically formalize the EIT problem, we introduce the forward operator of \eqref{21} by $\mathcal{F}(\cdot; g):
L^\infty(\Omega)\to H^1(\Omega)$, where $ g $ is the Neumann data in \eqref{21}. 
This operator maps the contrast $ m $ to its corresponding potential $ u $, that is $ \mathcal{F}(m; g) = u $.
With the trace operator be $\gamma: H^1(\Omega)\to H^{1/2}(\pa\Omega) $, the EIT problem consists in solving the following equations:
\begin{equation}\label{25}
\gamma\mathcal{F}(m; g_q) = f_q,\quad q=1,2,\cdots,Q.
\end{equation}
Note that solving these equations is nonlinear and severely ill-posed \cite{BL02}.
In practical applications, we can only acquire noisy measured data. 
Hence, we consider the noised perturbation $ f^\delta_q $ of the Dirichlet boundary data $ f_q $, where $\delta>0$ denotes the noise level (see Subsection \ref{S511} for the choice of $ f^\delta_q $). 
Thus, given the noisy Cauchy data $ \{(g_q, f^\delta_q)\}_{q=1}^Q $, we rewrite \eqref{25} as
\begin{equation}\label{26}
\gamma\mathcal{F}(m; g_q) \approx f^\delta_q,\quad q=1,2,\cdots,Q
\end{equation}
for the unknown contrast $m$.

In this paper, Newton type methods are used in our numerical algorithm as backbone.
Hence, it is necessary to compute the Fr\'{e}chet derivative of $ \mathcal{F} $.
It can be shown that $ \mathcal{F}(\cdot; g): m\mapsto u $ is Fr\'{e}chet differentiable, and the derivative is given by $ \mathcal{F}'(m;g)(q) = v $, where $ q\in L^2(\Omega) $ and $ v $ satisfies the following equation \cite{JBK12}
\begin{equation}\label{27}
\left\{ \begin{array}{ll}
-\nabla\cdot (\sigma\nabla v) = \nabla\cdot(q\nabla u) & \text{in}\; \Omega,\\
\sigma\dfrac{\pa v}{\pa \nu} = 0, & \text{on}\; \pa \Omega,
\end{array} \right.
\end{equation}
Here, $ u $ is the electrical potential corresponding to the contrast $ m $, with the electrical current $ g $ applied.
From the above argument, it follows that computing the numerical solution of the equation \eqref{27} is necessary to evaluate the Fr\'{e}chet derivative of $\mathcal{F}$.

To enable numerical reconstruction, it is necessary to discretize the contrast $ m $. 
Without loss of generality, we consider $ \Omega:=[-1,1]\times [-1,1]\in \R^2 $, which is first partitioned into a uniform $ N\times N $ grid of smaller squares (i.e., pixels), each with side length $ h=2/N $.
Let $ x_{ij}\;(i,j = 1,\cdots,N) $ denote pixel centers.
Each square is then divided into two right-angled triangles by its diagonal to form a shape regular quasi-uniform triangulation $ \mathcal{T}_h $ on $\Omega$ with mesh size $ h $ (see Figure \ref{F1} (b) for illustration).
Over $\mathcal{T}_h$, we define a continuous piecewise linear finite element space
\begin{equation}\label{28}
M_h = \{m_h\in L^\infty(\Omega): m_h|_T \text{ is a linear function } \forall T\in \mathcal{T}_h\},
\end{equation}
which will be employed to approximate the contrast $ m $.
Let $ \mathcal{I}_h $ be the Lagrange interpolation operator associated with the finite element space $ M_h $ \cite{EAG04}.
Then the contrast $ m $ can be discretized by $ \bds{m} := \mathcal{I}_h(m) $ and is also called the discrete contrast in this paper.
Similarly, the conductivity distribution $\sigma$ can be discretized by $ \bds{\sigma} := \mathcal{I}_h(\sigma) $ (see Figure \ref{F1} for an exemplary mesh).
We further introduce a discrete matrix $ \bds{S} = (\bds{S}_{ij})\in\R^{N\times N} $ with
\begin{equation}\label{211}
\bds{S}_{ij} :=
\left\{ \begin{array}{ll}
1, &  m(x_{ij})\neq 0,\\
0, &  m(x_{ij}) = 0,
\end{array} \right.
\end{equation}
to characterize $ \mathrm{supp}(m) $.
Note that the index set $ \{(i,j): \bds{S}_{ij}\} $ can be viewed as the discrete form of $ \mathrm{supp}(m) $.
We will also call $ \bds{S} $ the support matrix in this paper. 
In practical applications, we have only a finite number of $ P $ (even integer) electrodes uniformly attached on the boundary $ \pa \Omega $, which allow us to collect partial boundary data (see Figure \ref{F1} (b) for an exemplary illustration).
Let $ \{e_p\}_{p=1}^P\subset\R^2 $ denote the coordinates of the electrodes.
In our EIT simulation, we impose $ Q $ linearly independent current patterns $ \bds{g} :=(\bds{g}_q)\in\R^{P\times Q} $ (with $ \bds{g}_q $ be its column vector) on the electrodes, and then measure the corresponding noisy voltage distributions $ \bds{f}^\delta := (\bds{f}_q^\delta)\in\R^{P\times Q} $ (with $ \bds{f}_q^\delta $ be its column vector) on the electrodes.
Note that $ \bds{g}_q $ and $ \bds{f}_q^\delta $ can be viewed as approximations of the Neumann boundary data $ g_q $ and the Dirichlet boundary data $ f_q $ as we mentioned before.
To be more specific, given the $ q $-th current pattern, let $ \bds{g}_{q,p} $ and $ \bds{f}_{q,p}^\delta$ ($ p = 1,2,\cdots,P $) denote respectively the applied current and measured voltage on the $ p $-th electrodes.
Recall the conditions \eqref{22} and \eqref{23}, we require $ \sum_{p=1}^{P}\bds{g}_{q,p} = 0 $ and we adjust $\bds{f}_{q,p}^\delta$ so that $ \sum_{p=1}^{P}\bds{f}_{q,p}^\delta = 0 $.
In this work, we apply trigonometric current patterns defined as
\begin{equation}\label{29}
\bds{g}_{q,p} = g_q(e_p):=
\left\{ \begin{array}{ll}
\cos [(q+1)\theta_p/2], &  \text{for}\;1\leq q\leq Q\;\text{with $ q $ odd},\\
\sin [q\theta_p/2], & \text{for}\;1\leq q\leq Q\;\text{with $ q $ even},
\end{array} \right.
\end{equation}
where $ \theta_p := 2\pi (p-1)/P $.
Then the formula \eqref{26} can be approximated as follows:
\begin{equation}\label{210}
\bds{\gamma}\mathcal{F}^A(\bds{m}; \bds{g}_q) \approx \bds{f}_q^\delta,\quad q=1,2,\cdots,Q.
\end{equation}
where $ \mathcal{F}^A $ and $ \bds{\gamma} $ denote the discrete forms of $ \mathcal{F} $ and the trace operator $\gamma$, respectively. 
For the convenience of later use, we rewrite \eqref{210} as follows:
\begin{equation}\label{212}
\bds{F}(\bds{m}; \bds{g})\approx \bds{f}^\delta,
\end{equation}
where $ \bds{F}(\bds{m}; \bds{g}) := \left(\bds{\gamma}\mathcal{F}^A(\bds{m};\bds{g}_q)\right)\in\R^{P\times Q} $ with $ \bds{\gamma}\mathcal{F}^A(\bds{m};\bds{g}_q)\; (q=1,2,\cdots,Q) $ be its column vector. 
We denote the Fr\'{e}chet derivative of $ \bds{F} $ by $ \bds{F}' $.

\begin{figure}[htbp]
\centering
\includegraphics[width=0.8\textwidth]{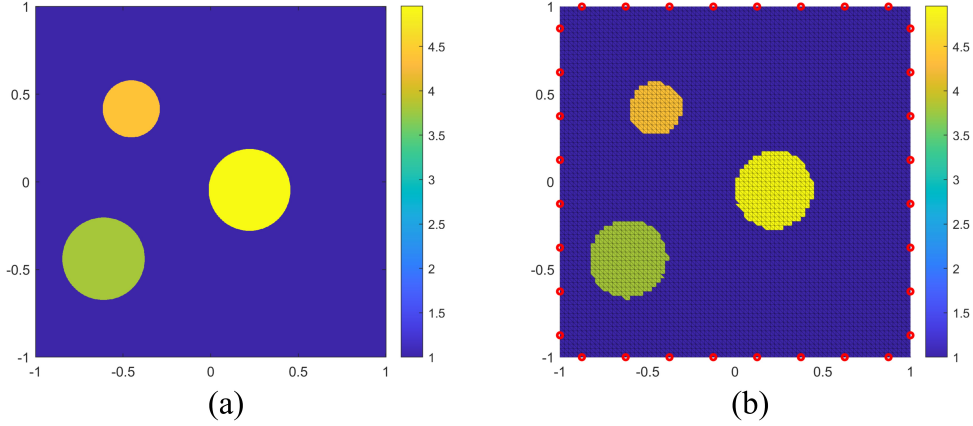}
\caption{EIT simulation for the conductivity distribution $\sigma$: (a) Original conductivity distribution $\sigma$; (b) Discretized conductivity distribution $ \bds{\sigma}=\mathcal{I}_h(\sigma) $, with $ Q=32 $ red dots indicating electrode positions.}\label{F1}
\end{figure}

\section{Learning support information from \Calderon's method}\label{S3}
\setcounter{equation}{0}
In this section, we propose a deep neural network to approximate the support of the unknown contrast, using the initial reconstruction provided by \Calderon's method.
The approximate support of the unknown contrast will then be used in the next section to develop a learning-enhanced variational regularization method for the EIT problem.

\subsection{\Calderon's method}\label{S31}

\Calderon's method is a computationally efficient, linearized, and non-iterative algorithm for EIT.
Building on the foundational work of \cite{CAP80}, it provides an approximation of the contrast $ m $ under the small-perturbation assumption that $ \|m\|_{L^\infty(\Omega)} $ is small (i.e., $\sigma\approx 1 $) with a knowledge of the DtN map $ (\Lambda_\sigma)^{-1} $.
To derive \Calderon's method, we first consider the following \textit{complex geometrical optics} solutions (CGO):
\begin{equation}\label{41}
\phi_1(x;k) = \exp(\pi ik\cdot x + \pi k^\perp\cdot x),\quad \text{and}\quad\phi_2(x;k) = \exp(\pi ik\cdot x - \pi k^\perp\cdot x),
\end{equation}
where $ k = (k_1, k_2) $ is a real-valued vector and $ k^\perp = (-k_2, k_1) $ is orthogonal to $k$. The complex-valued vectors $\zeta = k - i k^\perp$ and $\bar{\zeta} = k + i k^\perp$ serve as non-physical frequency variables in the CGO solutions, and $ x = (x_1, x_2) $ denotes the spatial variable for the domain $ \Omega $.
Let $\Lambda_1$ denote the NtD map for $ \sigma\equiv 1 $, which can be generally calculated by simulation.
Then the following approximation holds by using the inverse Fourier transform \cite{CAP80, CSJ23}:
\begin{equation}\label{42}
m(x)\approx C(x;\Lambda_\sigma) := \int_{|k|<R}H(k;\Lambda_\sigma)e^{-2\pi ik\cdot x}dk,
\end{equation}
where $ R $ is a constant controlling the frequency cutoff and
\begin{equation}\label{43}
H(k;\Lambda_\sigma) := -\dfrac{1}{2\pi^2|k|^2}\int_{\pa \Omega}\phi_1(\xi;k)\left[(\Lambda_\sigma)^{-1} - (\Lambda_1)^{-1}\right]\phi_2(\xi;k) ds(\xi).
\end{equation}
Note that truncating \( H(k;\Lambda_\sigma) \) to the region \( \{k : |k| < R\} \) results in a loss of information for high-frequency Fourier terms, and hence the obtained reconstruction tends to be blurry. This truncation can be viewed as a regularization strategy for \Calderon's method, since \( H(k; \Lambda_{\sigma}) \) approximates the Fourier transform of \( m(x) \) reasonably well only for small values of \( |k| \). This is because \( H(k; \Lambda_{\sigma}) \) is a linear approximation of the scattering transform \cite{SKM20}, and also because the high-frequency components of the data are particularly sensitive to noise: small measurement errors or fluctuations on the boundary can lead to disproportionately large errors in the reconstruction of high-frequency Fourier modes due to the ill-posedness of the inverse problem and the amplification effects inherent in the inversion process.

As mentioned in Section \ref{S2}, only the noisy Cauchy data $ (\bds{g}, \bds{f}^\delta) $ is available, where $ \bds{g} = (\bds{g}_q)\in\R^{P\times Q} $ and $ \bds{f}^\delta = (\bds{f}_q^\delta)\in\R^{P\times Q} $.
In order to visualize the approximation $ C(x;\Lambda_\sigma) $ given by \eqref{42}, we first discretize the CGO solution $ \phi_1(\xi;k) $ and $ \phi_2(\xi;k) $ as column vectors $ \bds{\phi}_{1,k} := \left[\phi_1(e_p;k)\right]\in\R^P $ and $ \bds{\phi}_{2,k} := \left[\phi_2(e_p;k)\right]\in\R^P $, respectively, where $ e_p $ ($ p=1,2,\cdots,P $) are coordinates of the electrodes (see Section \ref{S2}).
Let $ \bds{g}^\dagger $ and $ \bds{f}^\dagger $ be the Moore-Penrose pseudoinverses of the matrices $ \bds{g} $ and $ \bds{f}^\delta $, respectively.
The CGO solutions admit the following expansion:
\begin{equation}\label{44}
\bds{\phi}_{1,k} \approx \sum_{q=1}^{Q}a_{q,k}\bds{g}_q\quad\text{and}\quad \bds{\phi}_{2,k} \approx \sum_{q=1}^{Q}b_{q,k}\bds{f}_q^\delta,
\end{equation}
with coefficients $ \bds{a}_k = (a_{1,k}, a_{2,k}, \cdots, a_{Q,k})^T := \bds{g}^\dagger\bds{\phi}_{1,k} $ and $ \bds{b}_k = (b_{1,k}, b_{2,k}, \cdots, b_{Q,k})^T := \bds{f}^\dagger\bds{\phi}_{2,k} $.
Here, the subscript $ k $ indicates the dependence of the coefficients on the variable $ k $ and the superscript $ T $ refers to the transpose operator.
We now have the following approximation:
\begin{equation}\label{45}
H(k;\Lambda_{\sigma})\approx H^A\left(k; (\bds{g}, \bds{f}^\delta)\right) := -\dfrac{|\pa\Omega|}{2\pi^2|k|^2P}\bds{a}_k^T\bds{G}\bds{b}_k - \int_\Omega\exp(2\pi ik\cdot \xi)d\xi,
\end{equation}
where $ |\pa\Omega| $ denotes the length of the boundary $ \pa\Omega $, and $ \bds{G} := (G_{ij})\in\R^{Q\times Q} $ is a Gram matrix with entries $ G_{ij} := \bds{g}_i\cdot\bds{g}_j= \sum_{p=1}^{P}\bds{g}_{i,p}\bds{g}_{j,p} $.
The integral $ \int_\Omega\exp(2\pi ik\cdot \xi)d\xi $ in \eqref{45} can be computed numerically by Simpson's quadrature rule.
Then we can approximate the unknown contrast $ m(x) $ by
\begin{equation}\label{46}
m(x)\approx C(x;\Lambda_\sigma)\approx C^A\left(x;(\bds{g}, \bds{f}^\delta)\right) := \int_{|k|<R}H^A\left(k; (\bds{g}, \bds{f}^\delta)\right)e^{-2\pi ik\cdot x}dk.
\end{equation}
This integral is also evaluated via Simpson's quadrature rule.
Following Section \ref{S2}, we discretize $ \Omega = [-1,1]\times [-1,1] $ into $ N\times N $ uniformly distributed pixels $ x_{ij} $, $ i,j=1,\cdots,N $.
Then $ C^A $ can be approximately represented by a discrete matrix $ \bds{C} = (\bds{C}_{ij})\in \R^{N\times N} $ with $ \bds{C}_{ij} $ to be the real part of $ C^A\left(x_{ij};\{(\bds{g}_p, \bds{f}_p^\delta)\}_{p=1}^P\right) $ and is also called the \Calderon's matrix of the contrast $ m $ in the rest of the paper.
Here, the real part is taken because $ m $ is assumed to be real-valued in this paper (see Section \ref{S2}).
For more details of the numerical implementation of \Calderon's method, see \cite{CSJ23, BJ09} and references therein.

\begin{remark}\label{R1}\rm
\Calderon's method requires the contrast $m$ to be sufficiently small.
However, it is observed in the numerical results of \cite[Chapter 2.3]{BJ09} that \Calderon's method is still effective for recovering the shape and location (i.e., the support) of the unknown contrast even in the case of high contrast.
This motivates training a deep neural network to extract support information contained in \Calderon's method and thus provide reliable approximate support for designing a regularization functional for the EIT problem (see Section \ref{S4}).
Moreover, since reconstructions by \Calderon's method are typically blurry, as discussed in this subsection, applying a deep neural network is expected to enhance the resolution.
\end{remark}

\subsection{Deep neural network for generating approximate support}\label{S32}

In this subsection, we propose a deep neural network $\mathcal{M}_\Theta$ to extract the \textit{a priori} information of the support $ \mathrm{supp}(m) $ of the unknown contrast $ m $ (i.e., the support matrix $ \bds{S} $ defined in Section \ref{S2}) from the \Calderon's method (i.e., the \Calderon's matrix $ \bds{C} $ defined in Subsection \ref{S31}). 
Here, $\Theta$ denotes the trainable network parameters.
Once it is well trained, $\mathcal{M}_\Theta$ should provide a good approximate support matrix for the unknown contrast, which serves as important \textit{a priori} information for EIT reconstruction (see Sections \ref{S4} and \ref{S5}). 
In the following, we will introduce the architecture of the deep neural network $\mathcal{M}_\Theta$, the training strategy for finding a suitable $\mathcal{M}_\Theta$, and the method for applying $\mathcal{M}_\Theta$ to retrieve a good approximate support matrix.

\subsubsection{Network architecture}

We implement $\mathcal{M}_\Theta$ using a U-Net architecture. 
The original version of U-Net was first introduced in \cite{ROF15} for biomedical image segmentation. 
In this work, we use a modified version of U-Net (see Figure \ref{F2}), following the architecture design in \cite{LZZ24}.
The input and output of $\mathcal{M}_\Theta$ are volumes with size $ (N\times N \times 1) $ with $ N = 80 $ being the same as in Section \ref{S2}. 
See \cite{LZZ24, ROF15} for more details of the U-Net architecture.

\begin{figure}[htbp]
\centering
\includegraphics[width=.85\textwidth]{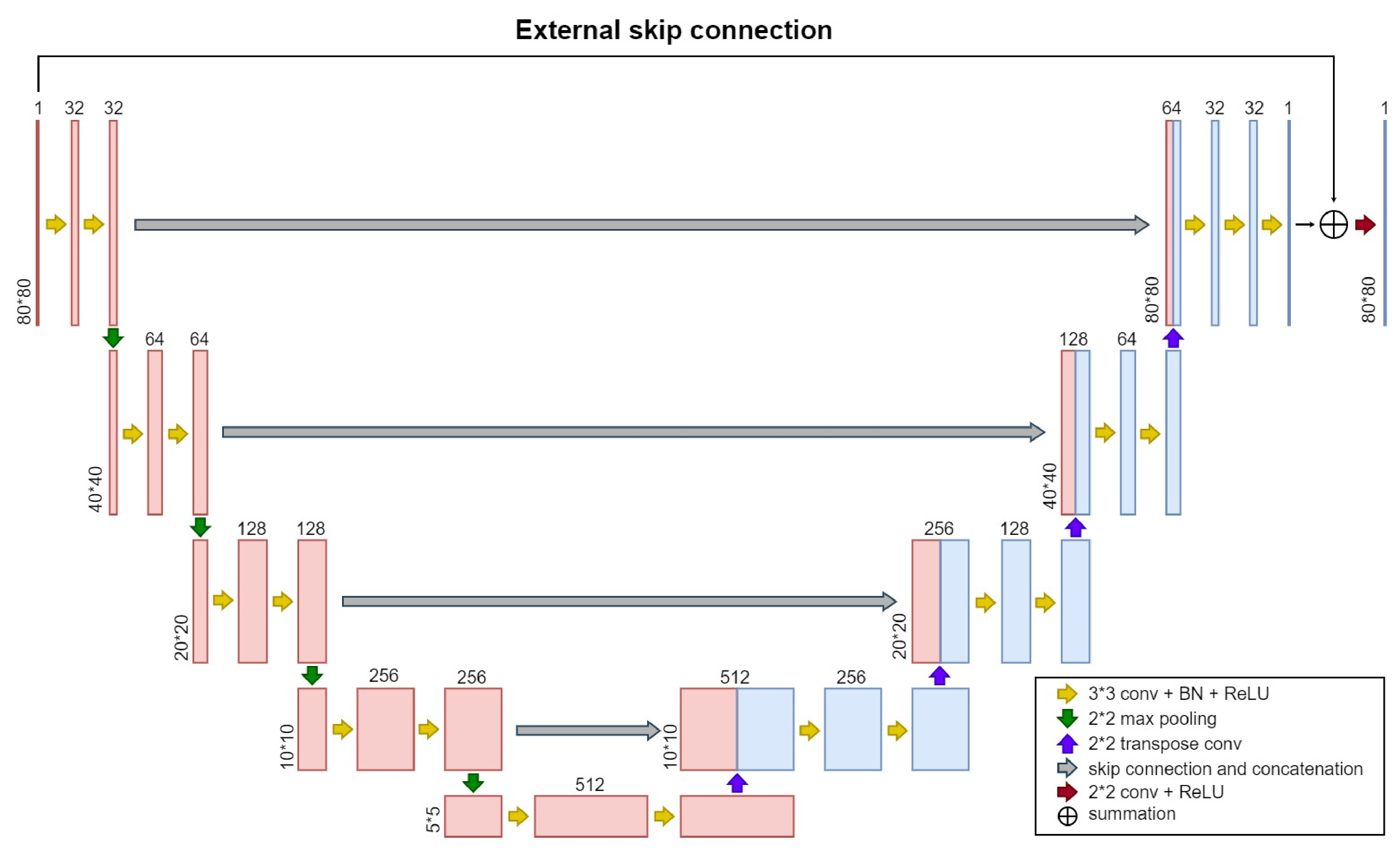}
\caption{The architecture of $\mathcal{M}_\Theta$. Each red and blue item represents a volume (also called multichannel feature map). The number of channels is shown at the top of the volume, and the length and width are provided at the lower-left edge of the volume. The arrows denote the different operations, which are explained at the lower-right corner of the figure.} \label{F2}
\end{figure}

\subsubsection{Training strategy and application}\label{S322}

The network $\mathcal{M}_\Theta$ aims to learn the mapping from normalized \Calderon's matrix $ \mathcal{N}(\bds{C}) $ to its corresponding support matrix $ \bds{S} $, where $ \bds{C} $ and $ \bds{S} $ denote the \Calderon's matrix and the support matrix given as in Subsection \ref{S31} and Section \ref{S2}, respectively. 
Here, $\mathcal{N}$ is a normalization operator defined by
$\mathcal{N}(\bds{M}):=\bds{M}/\|\bds{M}\|_{\max}$ for any
$\bds{M} := (\bds{M}_{ij})\in\R^{N\times N}$ with $\|\bds{M}\|_{\max} := \max_{1\leq i,j\leq N}|\bds{M}_{ij}| $.
As used in \cite{LZZ24}, we hope that applying $\mathcal{N}$ to a \Calderon's matrix $ \bds{C} $ can make $ \mathcal{M}_\Theta $ focus on extracting the \textit{a priori} information of the shape and location (i.e., the support) of the unknown contrast contained in its corresponding \Calderon's matrix $ \bds{C} $, and thus lead to a good approximation of the corresponding support matrices $ \bds{S} $.
During the training phase, we first generate a sample set of exact contrasts $ \{m_i\}_{i = 1}^T$ with $ T\in \mathbb{N}^+ $. 
Then we simulate the noisy Cauchy data pairs of $\{m_i\}_{i = 1}^T$, from which the corresponding \Calderon's matrices $ \{\bds{C}_i\}_{i = 1}^T $ are computed using \Calderon's method (see Subsection \ref{S31}). 
We further compute the support matrices $\{\bds{S}_i\}_{i = 1}^T$ of the sample set $\{m_i\}_{i = 1}^T$. 
The network $\mathcal{M}_\Theta$ is then trained on the following dataset with Xavier initialization \cite{GXB10}:
\begin{equation}\label{47}
D := \{(\mathcal{N}(\bds{C}_i), \bds{S}_i)\}_{i = 1}^T,
\end{equation}
where $\mathcal{M}_\Theta$ is trained to map $ \mathcal{N}(\bds{C}_i) $ to $ \bds{S}_i $, $i = 1,2,\ldots,T$, and the loss function is given as
\begin{equation}\label{48}
\mathcal{L}(\Theta) := \sum_{i=1}^T\left\|\mathcal{M}_\Theta(\mathcal{N}(\bds{C}_i)) - \bds{S}_i\right\|^2,
\end{equation}
where $ \|\cdot\| $ denotes the Frobenius norm of a matrix.
After training, we obtain the trained network $\mathcal{M}_{\widehat{\Theta}}$.

We now explain how to use our well-trained $\mathcal{M}_{\widehat{\Theta}}$.
For an unknown contrast with support matrix $\bds{S}$, we first measure its Cauchy data $ (\bds{g}, \bds{f}^\delta) $ (see Section \ref{S2}) and then compute its corresponding \Calderon's matrix $\bds{C}$ (see Subsection \ref{S31}). 
Further, we compute $ \bds{S}_A := \mathcal{M}_{\widehat{\Theta}}(\mathcal{N}(\bds{C})) $ to approximate the exact support matrix $\bds{S}$.
However, $ \bds{S}_A $ is not directly usable in practice since $ \bds{S}_A $ is not necessarily a binary matrix, which better characterizes the support area.
To address this issue, we introduce a thresholding operator $\mathcal{S}_\gamma$:
\begin{equation}\label{49}
\mathcal{S}_\g(\bds{M}):=(\mathcal{S}_\g(\bds{M}_{ij}))\in\R^{N\times N},\;\;
\mathcal{S}_\gamma(\bds{M}_{ij}):=\left\{ \begin{array}{ll}
1, & \bds{M}_{ij}>\gamma,\\
0, & \bds{M}_{ij}\leq\gamma,
\end{array} \right.
\quad \forall\bds{M}:=(\bds{M}_{ij})\in\mathbb{R}^{N\times N}.
\end{equation}
We then use $ \widetilde{\bds{S}} := \mathcal{S}_\gamma(\mathcal{M}_{\widehat{\Theta}}(\mathcal{N}(\bds{C})))\in \R^{N\times N} $ to approximate the support matrix $\bds{S}$, where $\gamma\in (0,1)$ will be chosen carefully in numerical reconstruction process (see Subsection \ref{S512} for the choice of $\gamma$).
With the above process, we are able to generate a reasonable approximation for the support of the unknown contrast, and hopefully for the high contrast case (see Remark \ref{R1}).
Since $\Omega = [-1,1]\times [-1,1]$ is partitioned into $ (N\times N) $ pixels in Section \ref{S2}, $ \widetilde{\bds{S}} $ can be naturally viewed as a function defined on $ \Omega $ by the following definition
\begin{equation}\label{50}
\widetilde{\bds{S}}(x) := \widetilde{\bds{S}}_{ij},\; \text{for}\; x\in N_h(x_{ij}) ,\quad \forall x\in \Omega,
\end{equation}
where $ N_h(x):= [x_1-h/2, x_1+h/2]\times [x_2-h/2, x_2+h/2] $ for any $ x = (x_1, x_2)\in\Omega $.
Here, $ x_{ij} $ and $ h $ are the same as in Section \ref{S2}.
Note that $ N_h(x_{ij}) $ denotes the $ (i,j) $-th pixel of $\Omega$.
Further, we have $ \widetilde{\bds{S}}\in M_h $.
Similarly, the support matrix $ \bds{S} $ can also be viewed as a function defined on $\Omega$.

\section{Learning-enhanced variational regularization method for EIT reconstruction}\label{S4}
\setcounter{equation}{0}

In this section, we propose the \textbf{LEVR-C} method for solving the EIT problem, incorporating the \textit{a priori} information of the shape and location (i.e., the support) of the unknown contrast as a regularization term, where the support information is retrieved from \Calderon's method using our well-trained deep neural network $ \mathcal{M}_{\widehat{\Theta}} $ (see Section \ref{S3}).
We highlight that the proposed variational regularization method can be applied to general inverse problems, and the stability and convergence analysis of this variational regularization method will be given under some appropriate assumptions in Appendix \ref{A}, which validates its effectiveness.
To be more specific, we consider the variational regularization method, which is one of the most popular approaches for solving the EIT problem.
This approach reformulates the inverse problem as the following optimization problem
\begin{equation}\label{51}
\underset{\bds{m}\in M_h}{\arg\min}\left\{\dfrac{1}{2}\left\|\bds{F}(\bds{m}; \bds{g}) - \bds{f}^\delta\right\|^2 + \mathcal{R}_\alpha(\bds{m})\right\},
\end{equation}
where $\mathcal{R}_\alpha: M_h \to [0, +\infty)$ is an appropriate regularization functional which encodes certain \textit{a priori} information of the exact contrast matrix, and $ \alpha>0 $ is usually chosen to be a regularization parameter that governs the influence of the \textit{a priori} knowledge encoded by the regularization functional on the need to fit data.
For the definition of the finite element space $ M_h $, see Section \ref{S2}. 
In this section, we will design the regularization functional $\mathcal{R}_\alpha$ by using the \textit{a priori} information of the support of the unknown contrast.
Precisely, we first use the deep neural network $ \mathcal{M}_{\widehat{\Theta}} $ to extract the approximate support matrix $ \widetilde{\bds{S}}\in M_h $ from \Calderon's method (see Subsection \ref{S322} for more details), and then define the regularization functional $\mathcal{R}_\alpha$ as
\begin{equation}\label{52}
\mathcal{R}_\alpha(\bds{m}) := \dfrac{1}{2}\left(\alpha\left\|\widetilde{\bds{S}}\bds{m}\right\|^2_2 + \left\|\left(1-\widetilde{\bds{S}}\right)\bds{m}\right\|^2_2\right),\quad \forall \bds{m}\in M_h,
\end{equation}
where $ \|\cdot\|_2 $ denotes the $ L^2 $-norm, and the regularization parameter $\alpha$ will be chosen carefully in numerical reconstruction process (see Subsection \ref{S512} for the choice of $\alpha$).
We note that, as $\alpha, \delta\to 0$, the regularization functional $ \mathcal{R}_\alpha $ defined above is expected to penalize large values of an approximate discrete contrast outside $ \mathrm{supp}(\wit{\bds{S}}) $ (see Proposition \ref{P1} for theoretical guarantee) and thus can hopefully lead to good reconstruction results if $ \widetilde{\bds{S}} $ is a good approximation to the exact support matrix $ \bds{S} $ (see Remark \ref{R3}).
Furthermore, under some appropriate assumptions, if $ \mathrm{supp}(\bds{m}) = \mathrm{supp}(\bds{S}) \subset \mathrm{supp}(\wit{\bds{S}}) $, the solutions of the proposed variational method \eqref{51} converge faster than the classical Tikhonov regularization (see Theorem \ref{thm54}).
The functional interpretations of $ \bds{S} $ and $ \wit{\bds{S}} $ follow from Subsection \ref{S322}.
To solve \eqref{51} with $ \mathcal{R}_\alpha $ defined as \eqref{52}, we use the Gauss-Newton method \cite{GBM07, BA24}, which updates at each iteration as follows:
\begin{equation}\label{53}
\bds{m}^\delta_{i+1} =\bds{m}^\delta_i + \left(\wit{\bds{S}}_\alpha + \left[\bds{F}'(\bds{m}^\delta_i; \bds{g})\right]^*\bds{F}'(\bds{m}^\delta_i; \bds{g}) \right)^{-1}\left[\bds{F}'(\bds{m}^\delta_i; \bds{g})\right]^*\left(\bds{f}^\delta - \bds{F}(\bds{m}^\delta_i; \bds{g})\right),
\end{equation}
where $ \bds{m}^\delta_i $ and $ \bds{m}^\delta_{i+1} $ are approximations to the unknown contrast at the $ i $-th and $ (i+1) $-th iterations, respectively, with the superscript $\delta$ indicating the dependence on the noise level.
Here, $ \left[\bds{F}'(\bds{m}^\delta_i; \bds{g})\right]^* $ is the adjoint of $ \bds{F}'(\bds{m}^\delta_i; \bds{g}) $, and $ \wit{\bds{S}}_\alpha(\bds{m}): M_h\to M_h $ is defined as
\begin{equation}\label{54}
\wit{\bds{S}}_\alpha(\bds{m}) := \alpha\wit{\bds{S}}\bds{m} + (1-\wit{\bds{S}})\bds{m},\quad \forall \bds{m}\in M_h.
\end{equation}
Since the \textit{a priori} information encoded in our variational regularization method \eqref{51} is learned by the deep neural network $ \mathcal{M}_{\widehat{\Theta}} $, the iterative method \eqref{53} is referred to as the Learning-Enhanced Variational Regularization method via \Calderon's method, abbreviated as \textbf{LEVR-C}.

We now describe the proposed \textbf{LEVR-C} for the EIT problem.
For an unknown contrast $ m $, we first measure its noisy Cauchy data $ (\bds{g}, \bds{f}^\delta) $ to obtain its \Calderon's matrix $ \bds{C} $ (see Subsection \ref{S31}) by the \Calderon's method, and then compute the approximate support matrix $ \widetilde{\bds{S}} := \mathcal{S}_\gamma(\mathcal{M}_{\widehat{\Theta}}(\mathcal{N}(\bds{C}))) $ (see Subsection \ref{S322}) and the operator $ \wit{\bds{S}}_\alpha $ as \eqref{54}. 
In the reconstruction process, the initial guess of $ \bds{m} $ is set to $ 0 $.
Let $ N_v $ be the total iteration number.
Then we can present \textbf{LEVR-C} in Algorithm \ref{V} for the EIT problem.
See Section \ref{S5} for the performance of our \textbf{LEVR-C}.

\begin{algorithm}[htbp]
\caption{Learning-enhanced variational regularization via \Calderon's method (\textbf{LEVR-C})}\label{V}

\textbf{Input: }$\bds{F}$, $ (\bds{g}, \bds{f}^\delta) $, $\gamma$, $ \mathcal{M}_{\widehat{\Theta}} $, $\alpha$, $ N_v $

\textbf{Output:} final approximate contrast for $ \bds{m} $

\textbf{Initialize:} $ i = 0 $, $ \bds{m}_0^\delta = 0 $

\begin{algorithmic}[1]
\item Compute the \Calderon's matrix $ \bds{C} $ with the Cauchy data pair $ (\bds{g}, \bds{f}^\delta) $
\item Compute $ \widetilde{\bds{S}} := \mathcal{S}_\gamma(\mathcal{M}_{\widehat{\Theta}}(\mathcal{N}(\bds{C}))) $ and the operator $ \widetilde{\bds{S}}_\alpha $ as \eqref{54}
\item \textbf{while} $ i<N_v  $ \textbf{do}
\item \begin{itemize}
\item[]  $ \bds{m}^\delta_{i+1} =\bds{m}^\delta_i + \left(\wit{\bds{S}}_\alpha + \left[\bds{F}'(\bds{m}^\delta_i \bds{g})\right]^*\bds{F}'(\bds{m}^\delta_i; \bds{g}) \right)^{-1}\left[\bds{F}'(\bds{m}^\delta_i; \bds{g})\right]^*\left(\bds{f}^\delta - \bds{F}(\bds{m}^\delta_i; \bds{g})\right) $
\end{itemize}
\item \begin{itemize}
\item[]  $ i\gets i+1 $	
\end{itemize} 
\item \textbf{end while}
\item Set final approximate contrast to be $\bds{m}_{N_v}^\delta$.
\end{algorithmic}
\end{algorithm}

\begin{remark}\label{R2}\rm
The proposed method \eqref{51} reduces to the classical Tikhonov regularization if the regularization functional $ \mathcal{R}_\alpha := (\alpha/2)\|\cdot\|^2_2 $.
In this case, the Gauss-Newton iteration \eqref{53} simplifies by replacing $ \wit{\bds{S}}_\alpha $ by $ \alpha\bds{I} $, where $ \bds{I} $ denotes the identity operator on $ M_h $.
We refer to this simplified version as the \textit{Tikhonov Algorithm} in this paper. 
To demonstrate the benefits provided by \textbf{LEVR-C}, we will compare the convergence rate of the proposed regularization method and Tikhonov regularization in Appendix \ref{A} under some appropriate assumptions, and the numerical performance of the proposed \textbf{LEVR-C} and \textit{Tikhonov Algorithm} will be carried out in Section \ref{S5}.
\end{remark}

\section{Numerical experiments}\label{S5}
\setcounter{equation}{0}

This section presents numerical examples to demonstrate the effectiveness of the proposed learning-enhanced variational regularization via \Calderon's method (i.e., \textbf{LEVR-C}) for the EIT problem.
Subsection \ref{S51} details the experimental configuration.
As $\mathcal{M}_{\widehat{\Theta}}$ plays a vital role in our method, we evaluate its training performance and generalization ability in Subsection \ref{S52}.
The performance of \textbf{LEVR-C} is shown in Subsection \ref{S53}.
To evaluate how the \textit{a priori} information of the shape and location (i.e., the support) of the unknown contrast contributes to the EIT problem, Subsection \ref{S53} compares our algorithm with \Calderon's method \cite{CAP80}, \textit{Tikhonov Algorithm} (see Remark \ref{R2}) and Deep \Calderon \cite{CSJ23}.
It is known that the regularization parameter has a significant impact on reconstruction quality for most variational regularization methods.
Subsection \ref{S54} investigates the influence of the regularization parameter $\alpha$ in the proposed \textbf{LEVR-C}, using \textit{Tikhonov Algorithm} as a reference baseline.
We also compare their convergence rates in Subsection \ref{S54}.

\subsection{Experimental setup}\label{S51}
The training process is performed on a local server (NVIDIA A100 GPU, Linux) and is implemented on PyTorch, whereas the computations of forward problem and \textbf{LEVR-C} are implemented using EIDORS (version 3.10) \cite{AAL06}.
All EIDORS simulations are run on MATLAB R2022b through a desktop workstation (Intel Core i7-10700 CPU (2.90 GHz), 32 GB of RAM, Windows 11).

\subsubsection{Simulation setup for the EIT problem}\label{S511}
As introduced in Section \ref{S2}, the support of the unknown contrast is assumed to lie within the square domain $ \Omega=[-1,1]\times [-1,1]$. 
We set $ P=32 $ electrodes with $ Q=32 $ excitation current patterns. 
The synthetic Cauchy data $ (\bds{g}, \bds{f}^\delta) $ is generated via finite element discretization of $ M_h $ with $N=320$ (i.e., $ h=1/160 $), as defined in Section \ref{S2}.
The noisy Dirichlet boundary data $ \bds{f}^\delta = (\bds{f}_q) $, $ q = 1,\ldots, Q$, is generated by adding relative Gaussian noise into the exact data pointwise as
\begin{equation}\label{61}
\bds{f}_q^\delta = \bds{f}_q + \delta\|\bds{f}_q\|_{\max}\bds{\xi}_q,
\end{equation}
where $\delta$ is the noise level and $ \bds{\xi}_q $ are standard normal distributions.
Unless otherwise specified, we use $ \delta = 10^{-4} $ as the default noise level.

\subsubsection{Parameter setting for inversion algorithms}\label{S512}

The proposed \textbf{LEVR-C} adopts the following parameters: finite element discretization of $ M_h $ with $ N = 80 $ (i.e., $ h = 2/N = 1/40 $), total iteration number $ N_v = 20 $, regularization parameter $ \alpha=10^{-3} $ and threshold $ \gamma=0.1 $.
The deep neural network $\mathcal{M}_{\widehat{\Theta}}$ is trained for $ t=200 $ epochs with the loss function $\mathcal{L}$ using the Adam optimizer \cite{KDP14} (batch size 100, learning rate of $ 10^{-4} $). 

\Calderon's method is implemented with resolution $ N=80 $ and the truncation radius $ R = 1.4 $. 
The \textit{Tikhonov Algorithm} share the same total iteration number and regularization parameter as our method (i.e., $ N_v=20,\;\alpha=10^{-3} $).
Deep \Calderon\;follows the same data preprocessing and loss function from \cite{CSJ23}, with identical training hyperparameters (batch size/learning rate/epochs) for fair comparison.

\subsubsection{Data generation for the neural network}\label{S513}

In this paper, we consider the exact contrast $ m $ consisting of several disjoint circular conductivity inclusions.
Specifically, let $U(a,b)$ denote the uniform distribution over $ [a,b] $.
We randomly generate two or three disjoint circular regions $ \{c_k\}_{k=1}^K $ defined by
\begin{equation}\label{62}
c_k := \{(x,y)\in\Omega: (x-x_k)^2 + (y-y_k)^2 \leq R^2_k\},
\end{equation}
where
\begin{itemize}
\item Each center $ (x_k,y_k) $ is uniformly distributed within $\Omega = [-1,1]\times [-1,1]$.
\item Each radius $ R_k $ is sampled from $ U(0.15,0.25) $.
\end{itemize} 
The conductivity contrast values are assigned as:
\begin{equation}\label{63}
m(x) = v_k,\quad \forall x\in c_k,\quad v_k\sim U(0,V_k),
\end{equation}
where $ V_k\sim U(1, 3) $.
An exemplary realization is illustrated in Figure \ref{F1} (a). 
This \textit{Circle Dataset} will be used to train the proposed deep neural network $ \mathcal{M}_\Theta $ (see Subsection \ref{S52}).

\subsubsection{Evaluation criterion}\label{S514}

Note that reconstructions by \textbf{LEVR-C} and \textit{Tikhonov Algorithm} belong to the finite element space $ M_h $ (see Section \ref{S2}), whereas \Calderon's method and Deep \Calderon\;produce $ (N\times N) $ pixel images.
For uniform comparison, we define:
\begin{itemize}
\item For any $ \bds{m}\in M_h $, its matrix representation $ \wit{\bds{m}} = (\wit{\bds{m}}_{ij})\in \R^{N\times N} $ with $ \wit{\bds{m}}_{ij} := \bds{m}(x_{ij}) $, where $ x_{ij} $ is the $ (i,j) $-th pixel center of the domain $\Omega$ (see Section \ref{S2}).
\item The ground truth $ \bds{m}^\dagger = (\bds{m}^\dagger_{ij})\in \R^{N\times N} $ with $ \bds{m}^\dagger_{ij} := m(x_{ij}) $, for the exact contrast $ m $.
\end{itemize}
Let $ \wit{\bds{m}} $ be the output of one of the aforementioned inversion algorithms, which is the approximation of $ \bds{m}^\dagger $. 
Then the relative error between the reconstructed conductivity distribution $\wit{\bds{\sigma}} := \wit{\bds{m}} + 1 $ and true conductivity distribution $\bds{\sigma}^\dagger := \bds{m}^\dagger + 1 $ can be defined as:
\begin{equation}\label{64}
\mathcal{E}(\bds{\sigma}^\dagger, \wit{\bds{\sigma}}) := \dfrac{\|\bds{\sigma}^\dagger - \wit{\bds{\sigma}}\|}{\|\bds{\sigma}^\dagger\|},
\end{equation}
where $ \|\cdot\| $ is the Frobenius norm as we defined in Subsection \ref{S322}.

As discussed in Section \ref{S4}, the approximate support matrix $ \wit{\bds{S}} $ provided by the trained neural network $ \mathcal{M}_{\widehat{\Theta}} $ plays an important role in the proposed \textbf{LEVR-C}.
Note that both $ \wit{\bds{S}} = (\wit{\bds{S}}_{ij}) $ and the exact contrast matrix $ \bds{S} = (\bds{S}_{ij}) $ are binary matrices (see  Subsection \ref{S322} and Section \ref{S2}), with $ \mathcal{M}_{\widehat{\Theta}} $ trained specifically for this segmentation task.
To quantify the agreement between $ \bds{S} $ and $ \wit{\bds{S}} $, we adopt three standard image segmentation metrics \cite{WZW20}:
\begin{itemize}
\item $Dice(\bds{S}, \wit{\bds{S}}) := \dfrac{2\sum_{i,j}\bds{S}_{ij}\wit{\bds{S}}_{ij}}{\sum_{i,j}\bds{S}_{ij} + \sum_{i,j}\wit{\bds{S}}_{ij}}$.

\item $Recall(\bds{S}, \wit{\bds{S}}) := \dfrac{\sum_{i,j}\bds{S}_{ij}\wit{\bds{S}}_{ij}}{\sum_{i,j}\bds{S}_{ij}}$.

\item $Precision(\bds{S}, \wit{\bds{S}}) := \dfrac{\sum_{i,j}\bds{S}_{ij}\wit{\bds{S}}_{ij}}{\sum_{i,j}\wit{\bds{S}}_{ij}}$.
\end{itemize}

\subsection{Performance evaluation of $\mathcal{M}_{\widehat{\Theta}}$}\label{S52}

The deep neural network $\mathcal{M}_{\Theta}$ is trained on the \textit{Circle Dataset} (see Subsection \ref{S513}) using the configuration specified in Subsection \ref{S512}, with $ 9000 $ samples for training and $ 1000 $ for validation.

First, we evaluate the training performance of $\mathcal{M}_{\widehat{\Theta}}$.
For this purpose, we compare $\mathcal{M}_{\widehat{\Theta}}$ with Deep \Calderon\ as a baseline method.
Following \cite{CSJ23}, Deep \Calderon\ is trained to map \Calderon's matrix $ \bds{C} $ (see Subsection \ref{S31}) directly to the exact contrast, which can be categorized as a post-processing method \cite{ASM19}.
For fairness, both models use the same training and validation samples during the training process, under identical training configurations (see Subsection \ref{S512}).
Figure \ref{F3} shows the normalized training dynamics, where both the training loss (blue curve) and validation loss (yellow curve) are scaled by their respective maximum training loss values.
As shown in Figure \ref{F3}, the proposed $ \mathcal{M}_{\widehat{\Theta}} $ achieves steeper loss reduction relative to Deep \Calderon, with a consistently smaller training-validation gap.
Regarding this, we believe this is because our training strategy (see Section \ref{S322}) makes $ \mathcal{M}_{\widehat{\Theta}} $ focus on learning the \textit{a priori} information of the shape and location (i.e., the support) of the unknown contrast, whereas Deep \Calderon\ is supposed to additionally learn the value of the unknown contrast. 

Second, we assess the generalization ability of $\mathcal{M}_{\widehat{\Theta}}$.
To do this, we generate $ 100 $ independent test samples from the \textit{Circle Dataset} to represent the exact contrast.
We test the trained $\mathcal{M}_{\widehat{\Theta}}$ on these $ 100 $ samples using the metrics of $ Dice $, $ Recall $ and $ Precision $ (see Subsection \ref{S514}), where the averages and the variances of these three metrics are presented in Table \ref{T1}.
The test results in Table \ref{T1} imply that $\mathcal{M}_{\widehat{\Theta}}$ can provide reliable \textit{a priori} information of the support of the unknown contrast for our \textbf{LEVR-C}.
In particular, the high $ Recall $ score in Table \ref{T1} indicates that the approximate support matrix $ \wit{\bds{S}} $ provided by $\mathcal{M}_{\widehat{\Theta}}$ can approximately satisfy $ \mathrm{supp}(\bds{S})\subset\mathrm{supp}(\wit{\bds{S}}) $ (strictly satisfied when $ Recall $ reaches $ 1 $), which is a crucial requirement for the proposed \textbf{LEVR-C} (see Remark \ref{R3}).

\begin{figure}[!htbp]
\centering
\includegraphics[width=.9\textwidth]{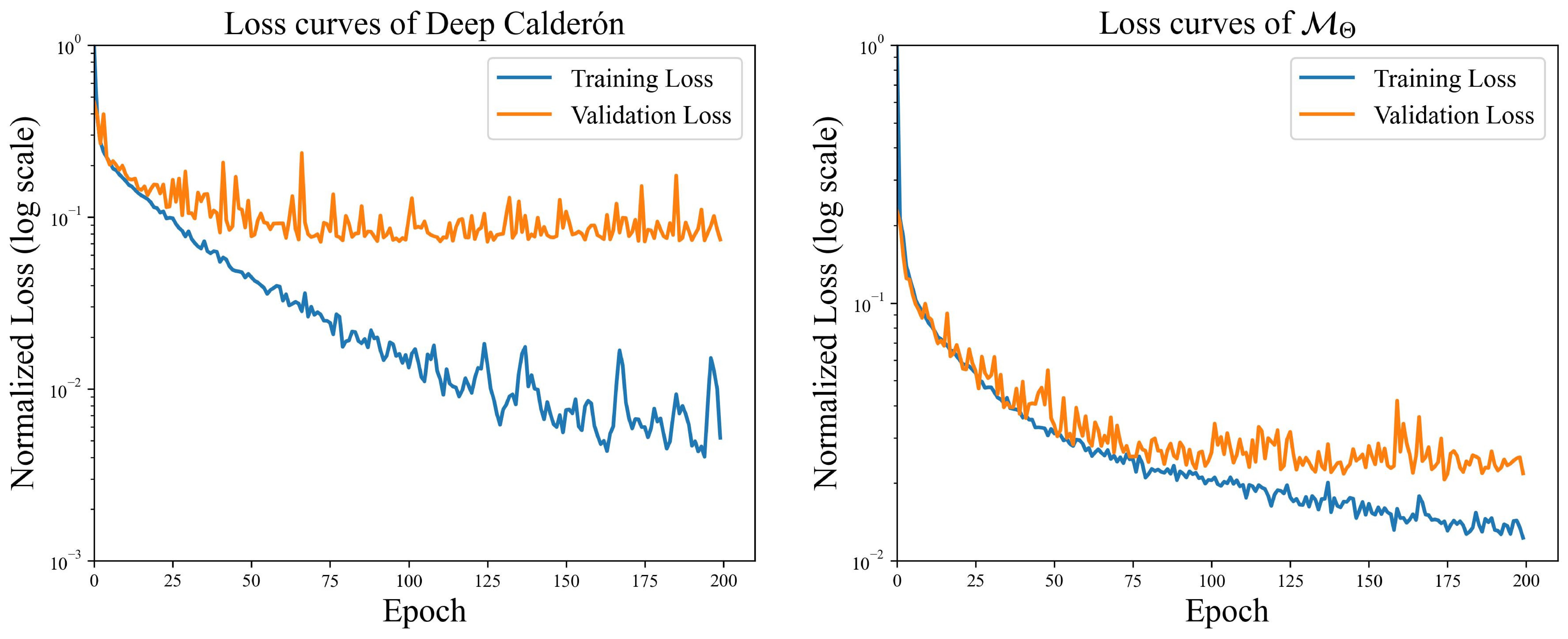}
\caption{Normalized training/validation loss curves of  Deep \Calderon\ (left) and $\mathcal{M}_{\widehat{\Theta}}$  (right).
The blue curve and yellow curve in each subplot represent the training loss curve and validation loss curve, respectively.}\label{F3}
\end{figure}

\begin{table}[!htpb]
\centering
\begin{tabular}{|p{2cm}<{\centering}|p{3cm}<{\centering}|p{3cm}<{\centering}|p{3cm}<{\centering}|}
\hline
& $ Dice [\%] $ & $ Recall [\%] $ & $ Precision [\%] $\\
\hline
average$ \uparrow $ & 97.50 & 98.80 & 96.33 \\
\hline 
variance$ \downarrow $ & $ 1.64\times 10^{-4} $ & $ 3.67\times 10^{-4} $ & $ 3.95\times 10^{-4} $ \\
\hline
\end{tabular}
\caption{The averages and variances of $ Dice $, $ Recall $ and $ Precision $ for the approximate support matrix provided by $\mathcal{M}_{\widehat{\Theta}}$ on \textit{Circle Dataset}.}\label{T1}
\end{table}

\subsection{Performance of \textbf{LEVR-C}}\label{S53}

For all inversion algorithms, we use the neural networks trained in Subsection \ref{S52} with parameter configurations specified in Subsection \ref{S512}.

First, we investigate the influence of values of different contrasts on \Calderon's method, \textit{Tikhonov Algorithm}, Deep \Calderon\ and \textbf{LEVR-C} quantitatively.
To do this, we consider three cases named Cases 1.1, 1.2 and 1.3.
In all three cases, we generate 100 independent test samples from the \textit{Circle Dataset} to represent the exact contrast matrices.
For Cases 1.1, 1.2 and 1.3, we set the norm $\|\cdot\|_{\max}$ of each exact contrast matrix to be 2, 3 and 4, respectively.
Table \ref{T1} displays the average relative errors $\mathcal{E}$ for all four algorithms across three cases.
Figure \ref{F4} presents the reconstruction results of several samples from these three cases. 
Each row of Figure \ref{F4} presents the numerical reconstructions of these four algorithms and the ground truth for one sample. 
Reconstruction results in Table \ref{T2} and Figure \ref{F4} show that our \textbf{LEVR-C} outperforms \Calderon's method and the \textit{Tikhonov Algorithm}, which shows the advantage offered by the \textit{a priori} information of the shape and location (i.e., the support) of the unknown contrast learned by $\mathcal{M}_{\widehat{\Theta}}$.
It can also be observed in Table \ref{T2} and Figure \ref{F4} that the proposed \textbf{LEVR-C} has a better performance than Deep \Calderon, especially in high contrast case.
Regarding this, we believe that our training strategy makes $\mathcal{M}_{\widehat{\Theta}}$ focus on learning the \textit{a priori} information of the support of the unknown contrast, which helps $\mathcal{M}_{\widehat{\Theta}}$ generalize to the high contrast case.
In contrast, Deep \Calderon\ tries to learn both the support and contrast values, which makes it less effective for the high contrast case.
Additionally, our method also uses the EIT physical model more effectively during the reconstruction process, compared with Deep \Calderon.
Moreover, the proposed \textbf{LEVR-C} still has satisfactory performance even when test contrasts are stronger than training examples (see Case 1.3).
For this observation, we believe this is because our deep neural network $\mathcal{M}_{\widehat{\Theta}}$ could extract the \textit{a priori} information of the  support from the \Calderon's method for various values of different contrasts, which is critical \textit{a priori} information for high quality reconstruction.

Secondly, we test the generalization ability of the proposed algorithm with the exact contrast matrix outside \textit{Circle Dataset}. 
The corresponding reconstructions of \Calderon's method, \textit{Tikhonov Algorithm}, Deep \Calderon\ and \textbf{LEVR-C} are shown in Figure \ref{F5}, where each row presents the approximate support generated by $ \mathcal{M}_{\widehat{\Theta}} $, the numerical reconstructions by these four algorithms and the ground truth for one sample. 
The reconstructions in Figure \ref{F5} demonstrate the good generalization ability of our algorithms, which could recover unknown contrasts with various shapes and values, even when they are not sampled from the \textit{Ellipse Dataset}. 
It is worth noting that our algorithms can also recover general piecewise smooth contrasts, whereas the training samples from \textit{Circle Dataset} are piecewise constant.
Moreover, it can be observed from Figure \ref{F5} that our method could generate satisfactory reconstruction results even when $ \mathcal{M}_{\widehat{\Theta}} $ does not provide accurate \textit{a priori} information of the support of the unknown contrast.

\begin{table}[htpb]
\centering
\begin{tabular}{|p{5.5cm}<{\centering}|p{2cm}<{\centering}|p{2cm}<{\centering}|p{2cm}<{\centering}|}
\hline
& Case 1.1 & Case 1.2 & Case 1.3\\
\hline
\Calderon's method & 37.60\% & 49.84\% & 58.79\% \\
\hline 
\textit{Tikhonov Algorithm} & 20.46\% & 28.52\% & 35.15\% \\
\hline
Deep \Calderon & 13.28\% & 18.72\% & 26.84\%\\
\hline
\textbf{LEVR-C} (ours) & 9.56\% & 13.22\% & 15.74\%\\
\hline
\end{tabular}
\caption{The average values of relative errors $ \mathcal{E} $ for the outputs of \Calderon's method, \textit{Tikhonov Algorithm}, Deep \Calderon\ and \textbf{LEVR-C} on the \textit{Circle Dataset}.}\label{T2}
\end{table}

\begin{figure}[htbp]
\centering
\includegraphics[width=1.\textwidth]{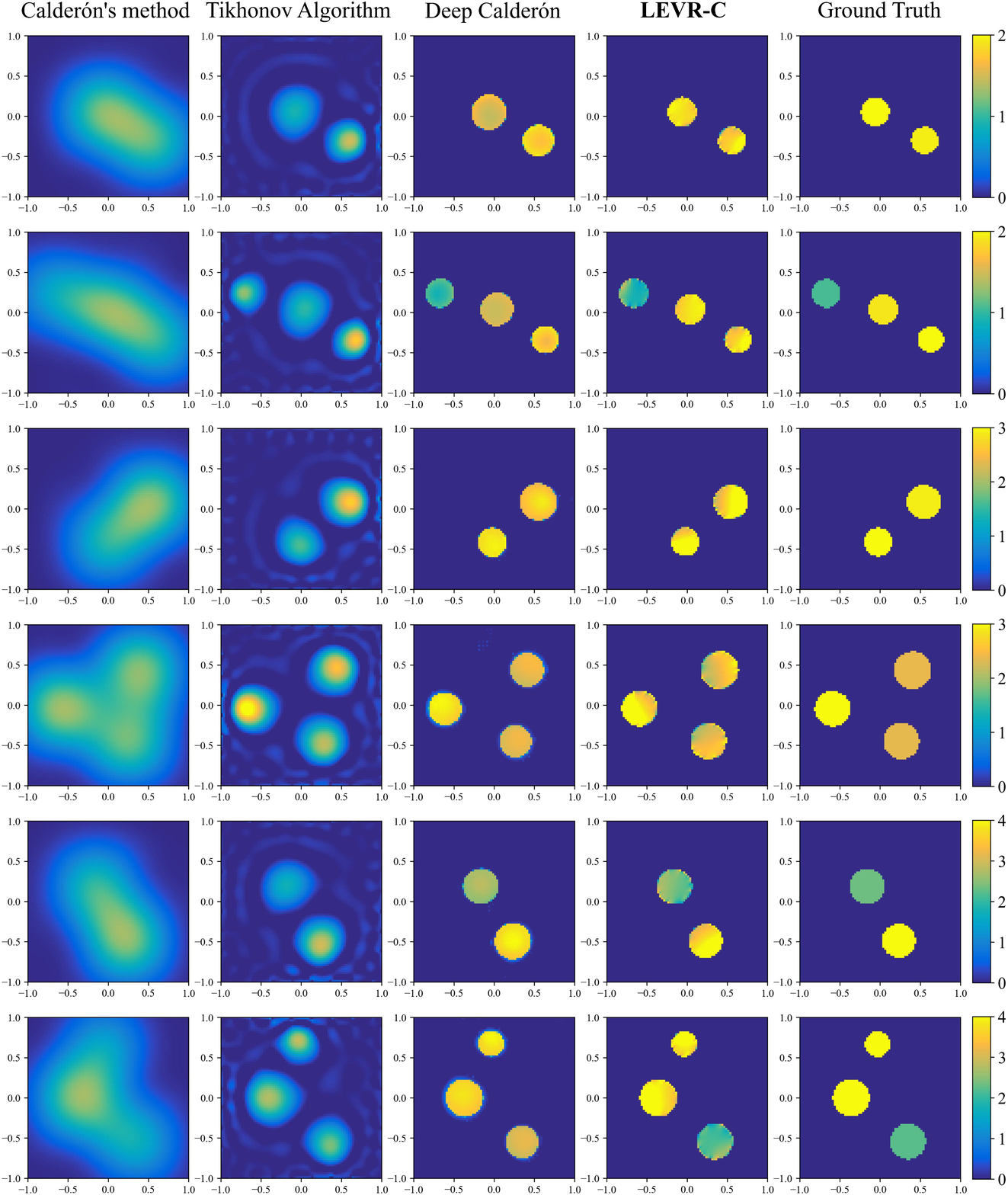}
\caption{Numerical results by \Calderon's method, \textit{Tikhonov Algorithm}, Deep \Calderon\ and \textbf{LEVR-C} for the exact contrast matrices generated from \textit{Circle Dataset}. Each row presents the reconstructions by \Calderon's method, \textit{Tikhonov Algorithm}, Deep \Calderon, \textbf{LEVR-C} and the ground truth for one sample.}\label{F4}
\end{figure}

\begin{figure}[htbp]
\centering
\includegraphics[width=1.\textwidth]{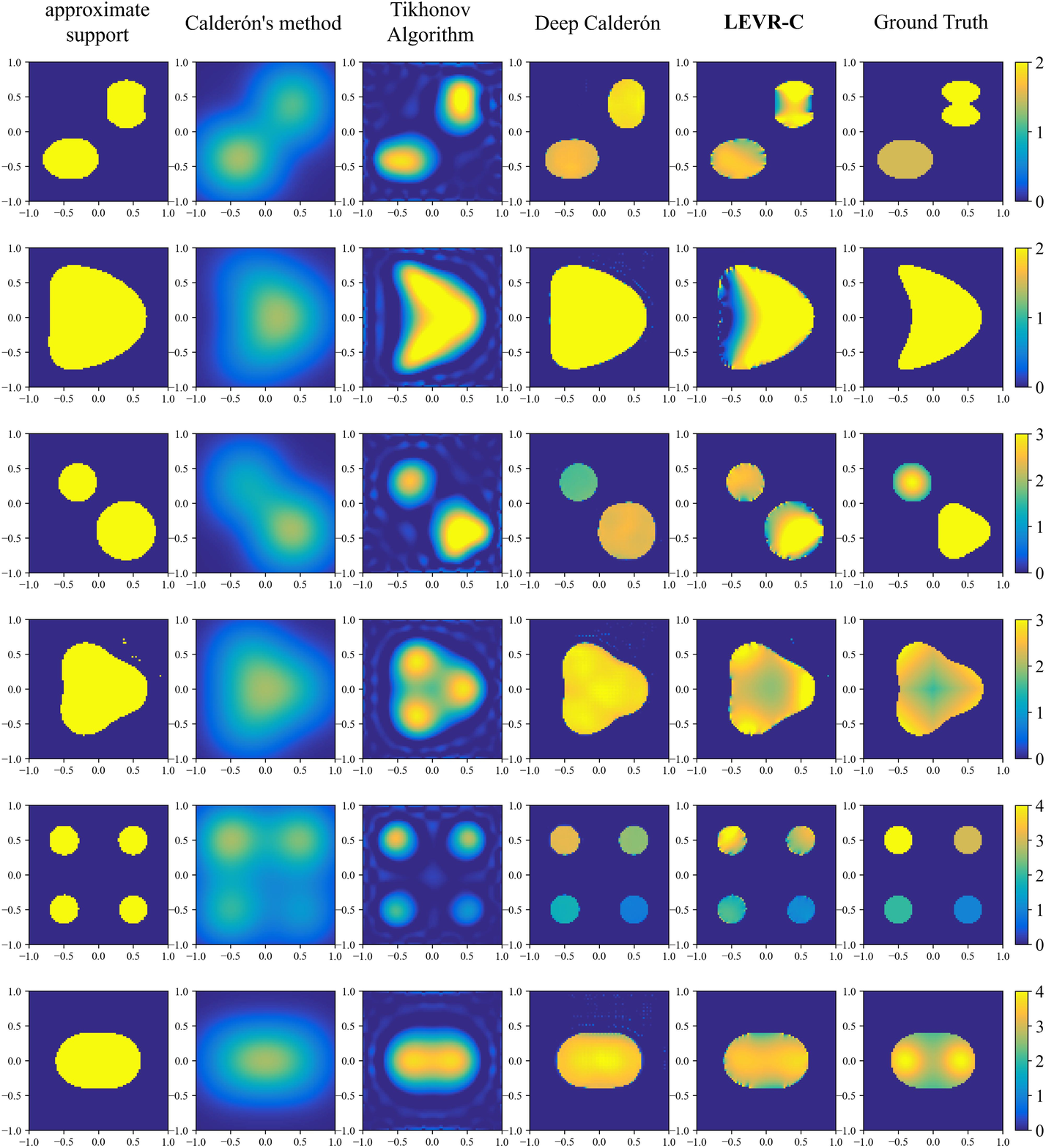}
\caption{Numerical results by \Calderon's method, \textit{Tikhonov Algorithm}, Deep \Calderon\ and \textbf{LEVR-C} for the exact contrast matrices generated from \textit{Circle Dataset}. Each row presents the approximate support generated by $ \mathcal{M}_{\widehat{\Theta}} $, the reconstructions by \Calderon's method, \textit{Tikhonov Algorithm}, Deep \Calderon, \textbf{LEVR-C} and the ground truth for one sample.}\label{F5}
\end{figure}

\subsection{Regularization parameter analysis}\label{S54}

In order to study the influence of the regularization parameter $\alpha>0$ (see Section \ref{S4}) on the proposed \textbf{LEVR-C}, we test our algorithm on two numerical samples with various values of $ \alpha $.
In this subsection, the influence of $\alpha$ on \textit{Tikhonov Algorithm} is also studied as a baseline.
The reconstruction results of \textbf{LEVR-C} and \textit{Tikhonov Algorithm} for the two samples are shown in Figures \ref{F6} and \ref{F7}.
To be more specific, each of Figure \ref{F6} (a) and Figure \ref{F7} (a) displays the reconstructions of \textbf{LEVR-C} with their corresponding relative error $\mathcal{E}$ below and regularization parameter $ \alpha $ above, and the ground truth for one sample.
Each of Figure \ref{F6} (b) and Figure \ref{F7} (b) presents the error curves of $ \mathcal{E}^+ $ (left) and $ \mathcal{E}^- $ (right) of \textbf{LEVR-C} (yellow curve) and \textit{Tikhonov Algorithm} (blue curve) for one sample, as $ \alpha\to 0 $, where
\begin{equation}\label{65}
\mathcal{E}^+(\bds{m}^\dagger, \wit{\bds{m}}) := \|\widetilde{\bds{S}}\odot(\bds{m}^\dagger-\wit{\bds{m}})\|,\quad \mathcal{E}^-(\bds{m}^\dagger, \wit{\bds{m}}) := \|(1-\widetilde{\bds{S}})\odot(\bds{m}^\dagger-\wit{\bds{m}})\|,
\end{equation}
where $ \odot $ denotes the element-wise multiplication of two matrices of the same size, and $ \bds{m}^\dagger $ and $ \wit{\bds{m}} $ are the same as in Section \ref{S514}.
Here $ \widetilde{\bds{S}} := \mathcal{S}_\gamma(\mathcal{M}_{\widehat{\Theta}}(\mathcal{N}(\bds{C}))) $ is the approximate support generated by our deep neural network $ \mathcal{M}_{\widehat{\Theta}} $ (see Subsection \ref{S322}).
We note that $ \mathcal{E}^+ $ characterizes the reconstruction error of $ \wit{\bds{m}} $ inside $ \mathrm{supp}(\widetilde{\bds{S}}) $, whereas $ \mathcal{E}^- $ characterizes the reconstruction error outside $ \mathrm{supp}(\widetilde{\bds{S}}) $.
The functional interpretation of $ \wit{\bds{S}} $ follows from Subsection \ref{S322}.
It can be observed from Figures \ref{F6} and \ref{F7} that the proposed \textbf{LEVR-C} is not very sensitive to its regularization parameter $\alpha$, as the relative errors $\mathcal{E}$ of reconstruction results do not grow drastically as $ \alpha\to 0 $.
Further, the error curves in Figures \ref{F6} (b) and \ref{F7} (b) show that the solutions converge faster in the area outside $ \mathrm{supp}(m) $, compare to the area inside $ \mathrm{supp}(m) $, which is consistent with the implication of Theorem \ref{thm54}.
Moreover, the reconstruction results in Figures \ref{F6} and \ref{F7} demonstrate the importance of the \textit{a priori} information of the support of the unknown contrast for the EIT problem.

\begin{figure}[htbp]
\centering
\includegraphics[width=1.\textwidth]{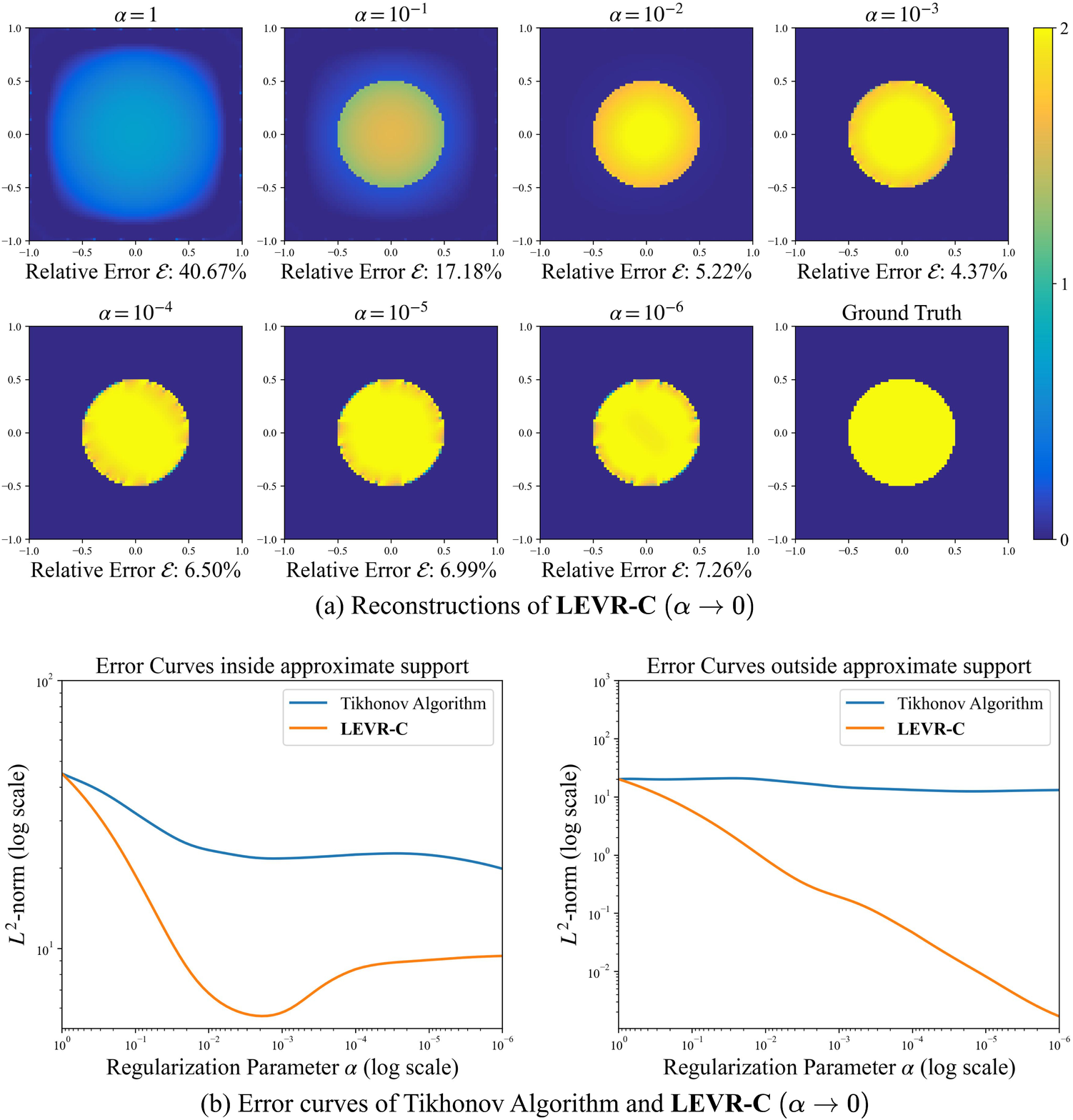}
\caption{Numerical results by \textit{Tikhonov Algorithm} and \textbf{LEVR-C} for one sample ($\alpha\to 0$).}\label{F6}
\end{figure}

\begin{figure}[htbp]
\centering
\includegraphics[width=1.\textwidth]{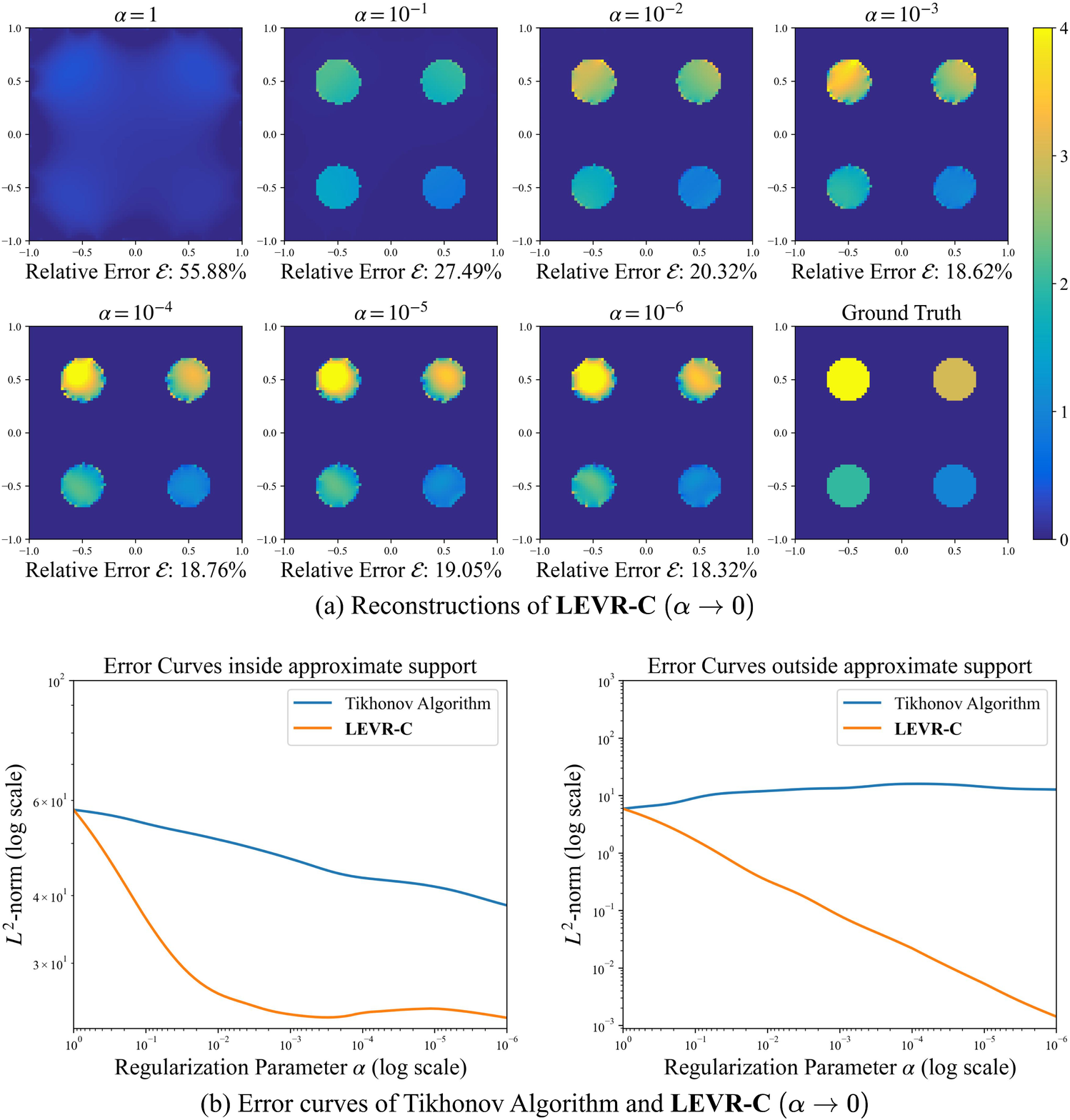}
\caption{Numerical results by \textit{Tikhonov Algorithm} and \textbf{LEVR-C} for one sample ($\alpha\to 0$).}\label{F7}
\end{figure}

\section{Conclusion}\label{S6}
\setcounter{equation}{0}
In this work, we addressed the inverse problem of electrical impedance tomography (EIT) in two dimensions, namely the reconstruction of the conductivity distribution (or its contrast) from its Cauchy data.
This EIT problem is highly nonlinear and severely ill-posed, and an appropriate regularization strategy, which relies heavily on certain \textit{a priori} information of the unknown contrast, is thus needed.
To this end, we trained a deep neural network $ \mathcal{M}_{\wih{\Theta}} $ to retrieve the \textit{a priori} information of the shape and location (i.e., the support) of the unknown contrast from \Calderon's method.
We then proposed the learning-enhanced variational regularization via \Calderon's method (i.e., \textbf{LEVR-C}) that incorporates the learned \textit{a priori} information of the support of the unknown contrast, and subsequently used the Gauss-Newton method to solve the learned variational formulation for EIT reconstruction.
The stability and convergence analysis of our regularization method demonstrates the significance of the \textit{a priori} information of the support of the unknown contrast.
With the aid of the \textit{a priori} information of the unknown contrast provided by the deep neural network $ \mathcal{M}_{\wih{\Theta}} $, our \textbf{LEVR-C} can provide good reconstruction results even for the high contrast case.

Extensive numerical experiments demonstrate that \textbf{LEVR-C} performs well for the inverse problem of EIT. 
First, we observe that our reconstruction algorithms could generate satisfactory results for a large variety of contrasts with different values and even for the values of the contrasts for testing are higher than those for training. 
For this observation, it is reasonable to deduce that $ \mathcal{M}_{\widehat{\Theta}} $ can extract the \textit{a priori} information of the shape and location (i.e., the support) from the \Calderon's method for various values of different contrasts, which is critical \textit{a priori} information for reconstruction.
Secondly, we notice that the proposed \textbf{LEVR-C} has good performance on samples outside the \textit{Circle Dataset}, on which we train our deep neural network $ \mathcal{M}_{\widehat{\Theta}} $. 
This suggests the satisfactory generalization ability of our algorithms, which can recover unknown contrasts with various shapes and values.
Thirdly, we observe that our algorithm does not show a significant performance decrease as the regularization parameter $\alpha\to 0$.
Regarding this, we believe this is because the \textit{a priori} information of the support of the unknown contrast in our regularization method can boost the convergence rate as $\alpha\to 0$.
However, it is observed in the numerical experiments that the reconstruction quality of \textbf{LEVR-C} becomes worse when the value of the exact contrast is large. 
One of the reasons may be due to the fact that when the value of the exact contrast is large the numerical solution of the corresponding forward problem in each iterative step is not accurate enough, which will deteriorate the final reconstruction results. 
It is also interesting to extend our method to other challenging inverse problems, which will be considered as a future work.

\appendix
\section{Variational regularization method based on support information}\label{A}

In this appendix, we establish the stability and convergence of the learning-enhanced variational regularization method (i.e., \textbf{LEVR-C}) \eqref{51} under some appropriate assumptions, and thus confirm the learned \textit{a priori} information of the support of the unknown contrast provided by the deep neural network $ \mathcal{M}_{\wih{\Theta}} $ (see Section \ref{S3}).
It should be noted that our variational regularization method can be generalized to various inverse problems.
Precisely, we first consider an inverse problem of a general form:
\begin{equation}\label{31}
F(x) = y,
\end{equation}
where $ y $ is the measured data, $ x $ is the unknown function we aim to recover and $ F: \mathcal{D}(F)\subset L^2(\Omega)\to \mathcal{Y} $ (possibly nonlinear) is the forward operator of the inverse problem.
Here, we assume $ \mathcal{D}(F)\subset L^2(\Omega) $ denotes the domain of the forward operator $ \mathcal{F} $, $ \mathcal{Y} $ is some appropriate Hilbert space and (with a slight abuse of notation) $ \Omega\subset \mathbb{R}^n $ is measurable and bounded.
Throughout this appendix, we assume that
\begin{itemize}
\item 
$ F $ is continuous and
\item 
$ F $ is weakly (sequentially) closed.
\end{itemize}
These hold when $ F $ is continuous and compact, with $ \mathcal{D}(F) $ weakly closed (e.g., closed and convex).
The aim of solving the inverse problem \eqref{31} is to recover the exact solution $ x $ from measurements $ y $, i.e., solve the (nonlinear) equation \eqref{31}.
To derive the variational regularization method that incorporates the \textit{a priori} information of the support of the exact solution, we assume that an exact solution $ x^\dagger\in\mathcal{D}\subset L^2(\Omega) $ to \eqref{31} exists, with $\mathcal{S}\supset\mathrm{supp}(x^\dagger)$ for some measurable $ \mathcal{S}\subset \Omega $. 
Given such \textit{a priori} information of $ \mathrm{supp}(x^\dagger) $, we define the $ x^* $-$\mathcal{S}$-minimum-norm solution $ \hat{x} $ for the inverse problem \eqref{31} as follows:
\begin{displaymath}
\hat{x} := \underset{x\in \mathcal{D}(F)}{\arg\min}\left\{\left\|x - x^*\right\|_2: F(x) = y\;\text{subject to}\; \mathrm{supp}(x)\subset\mathcal{S}\right\},
\end{displaymath}
where $ x^* $ is an initial guess satisfying $ \mathrm{supp}(x^*)\subset\mathcal{S} $.
Here, $ \left\|\cdot\right\|_2 $ denotes the $ L^2 $-norm of a function.
Moreover, we require $ x^* $ to be sufficiently close to $ x^\dagger $, ensuring $ x^\dagger $ is an $ x^* $-$\mathcal{S}$-minimum-norm solution of the inverse problem \eqref{31} (trivially satisfied when $ x^* = x^\dagger $).
Given the regularization parameter $ \alpha>0 $ and $\mathcal{S}\subset\Omega$, we define                               
\begin{align*}
f_{\mathcal{S},\alpha}(\lambda) := \left\{
\begin{aligned}
&\sqrt{\alpha}, &&\lambda\in \mathcal{S},\\
&1, && \lambda\in \Omega\setminus\mathcal{S}.
\end{aligned}
\right.
\end{align*}
The proposed variational regularization method for the inverse problem \eqref{31} is:
\begin{equation}\label{32}
\min_{x\in \mathcal{D}(F)}\left\{\left\|F(x) - y^\delta\right\|^2_\mathcal{Y} + \left\|x - x^*\right\|_{\mathcal{S}, \alpha}^2\right\},
\end{equation}
where $ \|\cdot\|_\mathcal{Y} $ is the norm in the Hilbert space $ \mathcal{Y} $ and $ y^\delta\in\mathcal{Y} $ is the noisy measurement satisfying $ \|y^\delta-y\|_\mathcal{Y}\leq \delta $. 
Here, for all $ x\in L^2(\Omega) $, the norm $ \|\cdot\|_{\mathcal{S}, \alpha} $ is defined by
\begin{displaymath}
\left\|x\right\|_{\mathcal{S}, \alpha} := \left\|f_{\mathcal{S},\alpha}\cdot x\right\|_2 = \left(\alpha\int_{\mathcal{S}}|x(\lambda)|^2d\lambda + \int_{\Omega\setminus\mathcal{S}}|x(\lambda)|^2d\lambda\right)^{1/2}.
\end{displaymath}
For the convenience of later use, we further give the definitions of the following norms:
\begin{displaymath}
\left\|x\right\|_{\mathcal{S}^+, 2} := \left(\int_{\mathcal{S}}|x(\lambda)|^2d\lambda \right)^{1/2},\qquad \left\|x\right\|_{\mathcal{S}^-, 2} := \left(\int_{\Omega\setminus\mathcal{S}}|x(\lambda)|^2d\lambda \right)^{1/2},
\end{displaymath}
and note that
\begin{displaymath}
\left\|x\right\|_{\mathcal{S}, \alpha}^2 = \alpha\left\|x\right\|_{\mathcal{S}^+, 2}^2 + \left\|x\right\|_{\mathcal{S}^-, 2}^2,\quad \left\|x\right\|_2^2 = \left\|x\right\|_{\mathcal{S}^+, 2}^2 + \left\|x\right\|_{\mathcal{S}^-, 2}^2.
\end{displaymath}
Under the given assumptions on the forward operator $ F $, the variational problem \eqref{32} admits at least one solution. 
Since $ F $ is potentially nonlinear, the solution is not necessarily unique.
It should be noted that if the exact discrete contrast $ \mathrm{supp}(\bds{m})= \mathrm{supp}(\bds{S})\subset \mathrm{supp}(\wit{\bds{S}}) $ (see Subsection \ref{S322}), then the application of \eqref{52} on the EIT problem yields the proposed learning-enhanced variational regularization \eqref{51} (i.e., \textbf{LEVR-C}).
As noted in Remark \ref{R2}, the weighted norm $ \left\|\cdot\right\|_{\mathcal{S}, \alpha} $ in \eqref{32} reduces to a scaled $ L^2 $-norm $ \alpha\left\|\cdot\right\|_2 $ when $ \mathcal{S}=\Omega $, recovering classical Tikhonov regularization.
However, we highlight that the proposed regularization functional $ \left\|\cdot\right\|_{\mathcal{S}, \alpha} $ is equipped with the \textit{a priori} information of the support of the exact solution, distinguishing it from classical Tikhonov regularization.
Precisely, our regularization functional $ \left\|\cdot\right\|_{\mathcal{S}, \alpha} $ is expected to penalize large values of an approximate solution outside $\mathcal{S}$ as $\delta,\; \alpha\to 0$, and hence can hopefully lead to good reconstruction results if $\mathcal{S}$ is a good approximation of $ \mathrm{supp}(x^\dagger)$ (see Proposition \ref{P1}).
Moreover, Theorem \ref{thm54} suggests that our variational regularization method has a better convergence rate, compared with the classical Tikhonov regularization.
In what follows, we present the stability analysis and convergence analysis of the proposed variational regularization method in Subsections \ref{A1} and \ref{A2}, respectively.

\begin{remark}\rm
The variational regularization method \eqref{32} requires the \textit{a priori} information of a measurable set $\mathcal{S}$ satisfying $ \mathrm{supp}(x^\dagger)\subset\mathcal{S} $.
However, the support information is generally unavailable for EIT problem and most inverse problems.
In this paper, we address this issue by learning the \textit{a priori} information of the support from \Calderon's method via the well-trained $\mathcal{M}_{\wih{\Theta}}$, yielding a practical $\mathcal{S}$ (see Section \ref{S3}).
\end{remark}

\subsection{Stability analysis}\label{A1}

We first prove that the problem of solving \eqref{32} is stable in the sense of continuous dependence of the solutions on measurement $ y^\delta $ (measurement stability).
\begin{theorem}[measurement stability]\label{thm51}
Let $ \alpha>0 $ and let $ \{y_k\} $ and $ \{x_k\} $ be sequences where $ y_k\to y^\delta $ and $ x_k $ is a minimizer of \eqref{32} with $ y^\delta $ replaced by $ y_k $. Then there exists a convergent subsequence of $ x_k $ and the limit of every convergent subsequence is a minimizer of \eqref{32}.
\end{theorem}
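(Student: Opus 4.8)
The plan is to follow the classical compactness (weak-subsequential) argument for Tikhonov-type regularization, adapted to the weighted norm $\|\cdot\|_{\mathcal{S},\alpha}$. Write $J_k(x) := \|F(x)-y_k\|^2_\mathcal{Y} + \|x-x^*\|^2_{\mathcal{S},\alpha}$ for the functional minimized by $x_k$, and $J(x) := \|F(x)-y^\delta\|^2_\mathcal{Y} + \|x-x^*\|^2_{\mathcal{S},\alpha}$ for the limiting functional. First I would establish boundedness of $\{x_k\}$: since $x_k$ minimizes $J_k$, testing against any fixed $z\in\mathcal{D}(F)$ gives $\|x_k-x^*\|^2_{\mathcal{S},\alpha}\le J_k(x_k)\le J_k(z)$, and because $y_k\to y^\delta$ the sequence $\{y_k\}$ is bounded, so the right-hand side is bounded uniformly in $k$. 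As $\alpha>0$, the weighted norm is equivalent to the ordinary $L^2$-norm, namely $\min\{1,\sqrt\alpha\}\,\|x\|_2\le\|x\|_{\mathcal{S},\alpha}\le\max\{1,\sqrt\alpha\}\,\|x\|_2$, so $\{x_k\}$ is bounded in $L^2(\Omega)$; the same inequality bounds $\{F(x_k)\}$ in $\mathcal{Y}$.

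Next I would extract limits and identify them. By reflexivity of $L^2(\Omega)$ and $\mathcal{Y}$, pass to a subsequence (not relabeled) with $x_k\rightharpoonup\bar{x}$ in $L^2$ and $F(x_k)\rightharpoonup w$ in $\mathcal{Y}$. The weak (sequential) closedness of $F$ then yields $\bar{x}\in\mathcal{D}(F)$ and $F(\bar{x})=w$. Using weak lower semicontinuity of the two norms together with the strong convergence $y_k\to y^\delta$, I obtain $\|F(\bar{x})-y^\delta\|_\mathcal{Y}\le\liminf\|F(x_k)-y_k\|_\mathcal{Y}$ and $\|\bar{x}-x^*\|_{\mathcal{S},\alpha}\le\liminf\|x_k-x^*\|_{\mathcal{S},\alpha}$, hence $J(\bar{x})\le\liminf J_k(x_k)$. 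On the other hand, for arbitrary $z\in\mathcal{D}(F)$ the minimizing property gives $J_k(x_k)\le J_k(z)$, while $J_k(z)\to J(z)$ since $y_k\to y^\delta$; combining yields $J(\bar{x})\le J(z)$ for all $z$, so $\bar{x}$ is a minimizer of \eqref{32}.

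The crux, and the step I expect to be the main obstacle, is upgrading weak convergence to the strong convergence demanded by the statement. Taking $z=\bar{x}$ in the minimizing inequality and passing to the limit gives $\limsup J_k(x_k)\le J(\bar{x})$, which together with the lower bound above forces $J_k(x_k)\to J(\bar{x})$. Since both nonnegative summands are individually weakly lower semicontinuous and their sum converges to the sum of the two limiting terms, neither of the two inequalities from the previous paragraph can be strict; in particular $\|x_k-x^*\|_{\mathcal{S},\alpha}\to\|\bar{x}-x^*\|_{\mathcal{S},\alpha}$. In the Hilbert space $(L^2(\Omega),\langle\cdot,\cdot\rangle_{\mathcal{S},\alpha})$, whose topology coincides with the standard one by the norm equivalence above, convergence of norms together with weak convergence implies strong convergence, so $x_k\to\bar{x}$ strongly in $L^2(\Omega)$. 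This produces the required convergent subsequence.

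Finally, for the second assertion, let $x_{k_j}\to\tilde{x}$ be any strongly convergent subsequence. Continuity of $F$ gives $F(x_{k_j})\to F(\tilde{x})$ in $\mathcal{Y}$, so $J_{k_j}(x_{k_j})\to J(\tilde{x})$ using $y_{k_j}\to y^\delta$; passing to the limit in $J_{k_j}(x_{k_j})\le J_{k_j}(z)$ then yields $J(\tilde{x})\le J(z)$ for every $z\in\mathcal{D}(F)$, so $\tilde{x}$ is a minimizer of \eqref{32}. This completes both parts of the statement.
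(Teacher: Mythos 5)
Your proof is correct and takes essentially the same route as the paper's: boundedness of $\{x_k\}$ from the minimizing property, weak compactness plus weak sequential closedness of $F$, weak lower semicontinuity to show $\bar{x}$ minimizes \eqref{32} together with convergence of the functional values, and finally convergence of $\left\|x_k - x^*\right\|_{\mathcal{S},\alpha}$ combined with weak convergence (Radon--Riesz in the equivalent Hilbert norm) to get strong convergence. The only cosmetic difference is that you extract the norm convergence directly from the liminf--limsup sandwich, while the paper packages exactly the same step as a contradiction argument on a sub-subsequence; your handling of the ``every convergent subsequence'' claim via continuity of $F$ is likewise sound.
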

\begin{proof}
By the definition of $ x_k $ we have
\begin{equation}\label{33}
\left\|F(x_k) - y_k\right\|^2_\mathcal{Y} + \left\|x_k - x^*\right\|_{\mathcal{S}, \alpha}^2\\
\leq 
\left\|F(x) - y_k\right\|^2_\mathcal{Y} + \left\|x - x^*\right\|_{\mathcal{S}, \alpha}^2
\end{equation}
for any $ x\in \mathcal{D}(F) $. Hence, $ \{\left\|x_k\right\|_2\} $ and $ \{\left\|F(x_k)\right\|_\mathcal{Y}\} $ are bounded. Therefore, a subsequence $ \{x_m\} $ of $ \{x_k\} $ and $\bar{x}$ exist such that
\begin{displaymath}
x_m\rightharpoonup\bar{x}\quad \mathrm{and}\quad F(x_m)\rightharpoonup F(\bar{x}).
\end{displaymath}
By weak lower semicontinuity of the norm we have
\begin{equation}\label{34}
\left\|\bar{x} - x^*\right\|_{\mathcal{S}, \alpha}\leq \underset{m\to \infty}{\lim\inf}\left\|x_m - x^*\right\|_{\mathcal{S}, \alpha}
\quad \mathrm{and}\quad
\left\|F(\bar{x}) - y^\delta\right\|_\mathcal{Y}\leq \underset{m\to \infty}{\lim\inf}\left\|F(x_m) - y_m\right\|_\mathcal{Y}.
\end{equation}
Moreover, \eqref{33} implies that
\begin{displaymath}
\begin{aligned}
\left\|F(\bar{x}) - y^\delta\right\|^2_\mathcal{Y} + \left\|\bar{x} - x^*\right\|_{\mathcal{S}, \alpha}^2
\leq 
&\underset{m\to \infty}{\lim\inf}\left(\left\|F(x_m) - y_m\right\|^2_\mathcal{Y} + \left\|x_m - x^*\right\|_{\mathcal{S}, \alpha}^2\right)\\
\leq 
&\underset{m\to \infty}{\lim\sup}\left(\left\|F(x_m) - y_m\right\|^2_\mathcal{Y} + \left\|x_m - x^*\right\|_{\mathcal{S}, \alpha}^2\right)\\
\leq
&\lim_{m\to\infty}\left(\left\|F(x) - y_m\right\|^2_\mathcal{Y} + \left\|x - x^*\right\|_{\mathcal{S}, \alpha}^2\right)\\
=
&\left\|F(x) - y^\delta\right\|^2_\mathcal{Y} + \left\|x - x^*\right\|_{\mathcal{S}, \alpha}^2
\end{aligned}
\end{displaymath}
for all $ x\in \mathcal{D}(F) $. This together with \eqref{33} imply that $\bar{x}$ is a minimizer of \eqref{32} and that
\begin{equation}\label{35}
\lim_{m\to\infty}\left(\left\|F(x_m) - y_m\right\|^2_\mathcal{Y} + \left\|x_m - x^*\right\|_{\mathcal{S}, \alpha}^2\right)\\
=
\left\|F(\bar{x}) - y^\delta\right\|^2_\mathcal{Y} + \left\|\bar{x} - x^*\right\|_{\mathcal{S}, \alpha}^2
\end{equation}
Now we assume that $ x_m\not\to\bar{x} $. Then without loss of generality, we assume $ c := \underset{m\to \infty}{\lim\sup}\left\|x_m - x^*\right\|_{\mathcal{S}, \alpha} > \left\|\bar{x} - x^*\right\|_{\mathcal{S}, \alpha} $ and there exists a subsequence $ \{x_n\} $ of $ x_m $ such that $ x_n\rightharpoonup \bar{x} $, $ F(x_n)\rightharpoonup F(\bar{x}) $ and $ \left\|x_n - x^*\right\|_{\mathcal{S}, \alpha}\to c $.
As a consequence of \eqref{35}, we obtain
\begin{equation*}
\begin{aligned}
\lim_{n\to\infty}\left\|F(x_n) - y_n\right\|^2_\mathcal{Y} =
&\left\|F(\bar{x}) - y^\delta\right\|^2_\mathcal{Y} + \alpha\left(\left\|\bar{x} - x^*\right\|_{\mathcal{S}, \alpha}^2 - c^2\right)\\
<&\left\|F(\bar{x}) - y^\delta\right\|^2_\mathcal{Y}
\end{aligned}
\end{equation*}
in contradiction to \eqref{34}. This argument shows that $ x_m\to \bar{x} $.
\end{proof}

The following theorem indicates that the problem of solving \eqref{32} is stable in the sense of continuous dependence of the solutions on $\mathcal{S}\supset \mathrm{supp}(x^\dagger)$ (regularization stability).

\begin{theorem}[regularization stability]\label{thm52}
Let $ \alpha>0 $ and let $ \{\mathcal{S}_k\} $ and $ \{x_k\} $ be sequences where the regularization function $ f_{\mathcal{S}_k,\alpha}\to f_{\mathcal{S},\alpha} $ in $ L^2(\Omega) $ and $ x_k $ is a minimizer of \eqref{32} with $ \mathcal{S} $ replaced by $ \mathcal{S}_k $. 
Then there exists a convergent subsequence of $ x_k $ and the limit of every convergent subsequence is a minimizer of \eqref{32}.
\end{theorem}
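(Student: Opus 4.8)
The plan is to mirror the structure of Theorem~\ref{thm51}, the essential new ingredient being the treatment of the \emph{varying} weight $f_{\mathcal{S}_k,\alpha}$ in the regularization term. Write $g_k := f_{\mathcal{S}_k,\alpha}$, $g := f_{\mathcal{S},\alpha}$, $w_k := x_k - x^*$, and $J_k(x):=\|F(x)-y^\delta\|^2_\mathcal{Y}+\|x-x^*\|^2_{\mathcal{S}_k,\alpha}$ with $J$ its $\mathcal{S}$-analogue. Two elementary facts drive everything. First, every $g_k$ takes values in $\{\sqrt\alpha,1\}$, so $\min(\alpha,1)\|v\|_2^2\le\|v\|_{\mathcal{S}_k,\alpha}^2\le\max(\alpha,1)\|v\|_2^2$ uniformly in $k$. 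Second, $\|g_k-g\|_2^2=(\sqrt\alpha-1)^2|\mathcal{S}_k\triangle\mathcal{S}|$, so the hypothesis $g_k\to g$ in $L^2$ is equivalent to $|\mathcal{S}_k\triangle\mathcal{S}|\to0$ (the case $\alpha=1$ being trivial). From the second fact I would extract the key auxiliary statement: for any \emph{fixed} $h\in L^2(\Omega)$ one has $g_kh\to gh$ in $L^2$, since $\|(g_k-g)h\|_2^2=(\sqrt\alpha-1)^2\int_{\mathcal{S}_k\triangle\mathcal{S}}|h|^2\to0$ by absolute continuity of the Lebesgue integral (shrinking sets, $|h|^2\in L^1$).

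First I would establish boundedness: testing the minimality of $x_k$ against $x=x^*$ gives $\|F(x_k)-y^\delta\|_\mathcal{Y}^2+\|w_k\|_{\mathcal{S}_k,\alpha}^2\le\|F(x^*)-y^\delta\|_\mathcal{Y}^2$, so by the uniform norm equivalence $\{\|x_k\|_2\}$ and $\{\|F(x_k)\|_\mathcal{Y}\}$ are bounded. Passing to a subsequence (not relabeled) and invoking weak closedness of $F$, I obtain $x_k\rightharpoonup\bar x$ in $L^2$ with $\bar x\in\mathcal{D}(F)$ and $F(x_k)\rightharpoonup F(\bar x)$ in $\mathcal{Y}$.

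The crux is lower semicontinuity of the varying-weight term. Splitting $\langle g_kw_k-g\bar w,\phi\rangle=\langle w_k,(g_k-g)\phi\rangle+\langle w_k-\bar w,g\phi\rangle$ for a test function $\phi$, the first term vanishes (the sequence $\{w_k\}$ is bounded while $(g_k-g)\phi\to0$ strongly by the auxiliary fact) and the second vanishes ($w_k\rightharpoonup\bar w$ against the fixed $g\phi\in L^2$), so $g_kw_k\rightharpoonup g\bar w$; weak lower semicontinuity of the $L^2$-norm then yields $\|\bar x-x^*\|_{\mathcal{S},\alpha}=\|g\bar w\|_2\le\liminf\|g_kw_k\|_2=\liminf\|w_k\|_{\mathcal{S}_k,\alpha}$. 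The auxiliary fact also shows the competitor side converges, $\|x-x^*\|_{\mathcal{S}_k,\alpha}\to\|x-x^*\|_{\mathcal{S},\alpha}$ for each fixed $x$. Combining these with weak lower semicontinuity of the data term, superadditivity of $\liminf$, and the minimality inequality $J_k(x_k)\le J_k(x)$ shows $\bar x$ minimizes \eqref{32}; testing minimality against $x=\bar x$ then upgrades this to $J_k(x_k)\to J(\bar x)$.

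Finally I would upgrade to strong convergence. From $J_k(x_k)\to J(\bar x)$ together with the two $\liminf$ inequalities (data and regularization terms), a squeezing argument forces $\|g_kw_k\|_2\to\|g\bar w\|_2$; combined with $g_kw_k\rightharpoonup g\bar w$ in the Hilbert space $L^2$, this gives $g_kw_k\to g\bar w$ strongly. To return to $x_k$, note $g_k(w_k-\bar w)=(g_kw_k-g\bar w)+(g-g_k)\bar w\to0$ strongly (the second piece again by the auxiliary fact with $h=\bar w$), and since $g_k\ge\min(\sqrt\alpha,1)>0$ this yields $\|w_k-\bar w\|_2\to0$, i.e. $x_k\to\bar x$ in $L^2$. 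The ``every convergent subsequence'' clause follows by running this whole argument on an arbitrary subsequence. The main obstacle is precisely the interaction between the weakly (not strongly) convergent $x_k$ and the shrinking sets $\mathcal{S}_k\triangle\mathcal{S}$: one \emph{cannot} assert $\int_{\mathcal{S}_k\triangle\mathcal{S}}|x_k-x^*|^2\to0$, since a bounded $L^2$-sequence may concentrate on a limiting null set. The argument must therefore be routed through weak convergence of the \emph{products} $g_kw_k$ and the boundedness of $g_k$ away from $0$ and $\infty$, rather than through any direct comparison of the two weighted norms of the moving iterate $x_k$.
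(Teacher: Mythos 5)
Your proof is correct, and it shares the overall skeleton of the paper's argument: minimality gives boundedness of $\{\|x_k\|_2\}$ and $\{\|F(x_k)\|_{\mathcal{Y}}\}$, weak closedness of $F$ yields a weak subsequential limit $\bar{x}\in\mathcal{D}(F)$ with $F(x_k)\rightharpoonup F(\bar{x})$, lower semicontinuity plus convergence of the competitor term shows $\bar{x}$ minimizes \eqref{32} together with convergence of the objective values, and a final step upgrades weak to strong convergence. Where you differ is precisely at the two points the paper leaves terse, and in both cases your version is more complete. First, the paper justifies $\|f_{\mathcal{S},\alpha}\cdot(\bar{x}-x^*)\|_2 \le \liminf_m \|f_{\mathcal{S}_m,\alpha}\cdot(x_m-x^*)\|_2$ by ``weak lower semicontinuity of the norm'' and later simply asserts $f_{\mathcal{S}_m,\alpha}\cdot(x_m-x^*)\rightharpoonup f_{\mathcal{S},\alpha}\cdot(\bar{x}-x^*)$; both statements require exactly your auxiliary lemma --- $\|(g_k-g)h\|_2^2=(\sqrt{\alpha}-1)^2\int_{\mathcal{S}_k\triangle\mathcal{S}}|h|^2\to 0$ for fixed $h$ by absolute continuity of the integral --- combined with your bounded-times-strong splitting $\langle g_kw_k-g\bar{w},\phi\rangle=\langle w_k,(g_k-g)\phi\rangle+\langle w_k-\bar{w},g\phi\rangle$. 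You thus supply the justification the paper omits, and your observation that one \emph{cannot} argue via $\int_{\mathcal{S}_k\triangle\mathcal{S}}|x_k-x^*|^2\to 0$ (a bounded $L^2$-sequence may concentrate) correctly identifies why the product route is unavoidable. Second, for the strong-convergence finish the paper argues by contradiction: assuming a subsequence with $\|f_{\mathcal{S}_n,\alpha}\cdot(x_n-x^*)\|_2\to c>\|f_{\mathcal{S},\alpha}\cdot(\bar{x}-x^*)\|_2$, it derives $\lim_n\|F(x_n)-y^\delta\|^2_{\mathcal{Y}}<\|F(\bar{x})-y^\delta\|^2_{\mathcal{Y}}$, contradicting lower semicontinuity of the data term; you instead squeeze $\|g_kw_k\|_2\to\|g\bar{w}\|_2$ directly from the objective convergence and conclude by the Radon--Riesz property. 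These are contrapositives of one another, so the content is the same, but your route additionally makes explicit the passage from $g_kw_k\to g\bar{w}$ back to $x_k\to\bar{x}$ via $g_k\ge\min(\sqrt{\alpha},1)>0$, a step the paper treats as immediate when it writes that $x_m\not\to\bar{x}$ implies $f_{\mathcal{S}_m,\alpha}\cdot(x_m-x^*)\not\to f_{\mathcal{S},\alpha}\cdot(\bar{x}-x^*)$. One small repair: your boundedness step tests minimality against $x=x^*$, which presumes $x^*\in\mathcal{D}(F)$, an assumption the paper does not make; test instead against an arbitrary fixed $x_0\in\mathcal{D}(F)$ (as the paper effectively does), which gives the same uniform bound via your norm equivalence $\min(\alpha,1)\|v\|_2^2\le\|v\|_{\mathcal{S}_k,\alpha}^2\le\max(\alpha,1)\|v\|_2^2$.
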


\begin{proof}
By the definition of $ x_k $ we have
\begin{equation}\label{37}
\begin{aligned}
\left\|F(x_k) - y^\delta\right\|^2_\mathcal{Y} + \left\|x_k - x^*\right\|_{\mathcal{S}, \alpha}^2
=
&\left\|F(x_k) - y^\delta\right\|^2_\mathcal{Y} + \left\|f_{\mathcal{S}_k,\alpha}\cdot (x_k-x^*)\right\|_2^2\\
\leq 
&\left\|F(x) - y^\delta\right\|^2_\mathcal{Y} + \left\|f_{\mathcal{S}_k,\alpha}\cdot (x-x^*)\right\|_2^2\\
\leq 
&\left\|F(x) - y^\delta\right\|^2_\mathcal{Y} + \left\|x-x^*\right\|_2^2
\end{aligned}
\end{equation}
for any $ x\in \mathcal{D}(F) $. Hence, $ \{\left\|x_k\right\|_2\} $ and $ \{\left\|F(x_k)\right\|_\mathcal{Y}\} $ are bounded. Therefore, a subsequence $ \{x_m\} $ of $ \{x_k\} $ and $\bar{x}$ exist such that
\begin{displaymath}
x_m\rightharpoonup\bar{x}\quad \mathrm{and}\quad F(x_m)\rightharpoonup F(\bar{x}).
\end{displaymath}
By weak lower semicontinuity of the norm we have
\begin{displaymath}
\left\|\bar{x} - x^*\right\|_{\mathcal{S}, \alpha} = \left\|f_{\mathcal{S},\alpha}\cdot (\bar{x}-x^*)\right\|_2^2\\
\leq
\underset{m\to \infty}{\lim\inf}\left\|f_{\mathcal{S}_m,\alpha}\cdot (x_m-x^*)\right\|_2^2 = \underset{m\to \infty}{\lim\inf}\left\|x_m - x^*\right\|_{\mathcal{S}_m, \alpha},
\end{displaymath}
and
\begin{equation}\label{38}
\left\|F(\bar{x}) - y^\delta\right\|_\mathcal{Y}\leq \underset{m\to \infty}{\lim\inf}\left\|F(x_m) - y^\delta\right\|_\mathcal{Y}.
\end{equation}
Moreover, \eqref{37} implies that
\begin{displaymath}
\begin{aligned}
\left\|F(\bar{x}) - y^\delta\right\|^2_\mathcal{Y} + \left\|\bar{x} - x^*\right\|_{\mathcal{S}, \alpha}^2
\leq 
&\underset{m\to \infty}{\lim\inf}\left(\left\|F(x_m) - y^\delta\right\|^2_\mathcal{Y} + \left\|x_m - x^*\right\|_{\mathcal{S}_m, \alpha}^2\right)\\
\leq 
&\underset{m\to \infty}{\lim\sup}\left(\left\|F(x_m) - y^\delta\right\|^2_\mathcal{Y} + \left\|x_m - x^*\right\|_{\mathcal{S}_m, \alpha}^2\right)\\
\leq
&\lim_{m\to\infty}\left(\left\|F(x) - y^\delta\right\|^2_\mathcal{Y} + \left\|x - x^*\right\|_{\mathcal{S}_m, \alpha}^2\right)\\
=
&\left\|F(x) - y^\delta\right\|^2_\mathcal{Y} + \left\|x - x^*\right\|_{\mathcal{S}, \alpha}^2
\end{aligned}
\end{displaymath}
for all $ x\in \mathcal{D}(F) $. This argument shows that $\bar{x}$ is a minimizer of \eqref{32} and that
\begin{equation}\label{39}
\lim_{m\to\infty}\left(\left\|F(x_m) - y^\delta\right\|^2_\mathcal{Y} + \left\|x_m - x^*\right\|_{\mathcal{S}_m, \alpha}^2\right) = \left\|F(\bar{x}) - y^\delta\right\|^2_\mathcal{Y} + \left\|\bar{x} - x^*\right\|_{\mathcal{S}, \alpha}^2.
\end{equation}
Now we assume that $ x_m\not\to\bar{x} $, then $ f_{\mathcal{S}_m,\alpha}\cdot(x_m-x^*)\not\to f_{\mathcal{S},\alpha}\cdot(\bar{x}-x^*) $ but $ f_{\mathcal{S}_m,\alpha}\cdot(x_m-x^*)\rightharpoonup f_{\mathcal{S},\alpha}\cdot(\bar{x}-x^*) $.
Without loss of generality, we assume that $ c := \underset{m\to \infty}{\lim\sup}\left\|f_{\mathcal{S}_m,\alpha}\cdot(x_m-x^*)\right\|_2 > \left\|f_{\mathcal{S},\alpha}\cdot(\bar{x}-x^*)\right\|_2 $ and there exists a subsequence $ \{x_n\} $ of $ x_m $ such that $ f_{\mathcal{S}_n,\alpha}\cdot(x_n-x^*)\rightharpoonup f_{\mathcal{S},\alpha}\cdot(\bar{x}-x^*), F(x_n)\rightharpoonup F(\bar{x}) $ and $ \left\|x_n-x^*\right\|_{\mathcal{S}_n, \alpha} = \left\|f_{\mathcal{S}_n,\alpha}\cdot(x_n-x^*)\right\|_2\to c $. As a sequence of \eqref{39}, we obtain
\begin{equation*}\label{310}
\begin{aligned}
\lim_{n\to\infty}\left\|F(x_n) - y^\delta\right\|^2_\mathcal{Y} =
&\left\|F(\bar{x}) - y^\delta\right\|^2_\mathcal{Y} + \alpha\left(\left\|\bar{x} - x^*\right\|_{\mathcal{S}, \alpha}^2 - c^2\right)\\
=&\left\|F(\bar{x}) - y^\delta\right\|^2_\mathcal{Y} + \alpha\left(\left\|f_{\mathcal{S},\alpha}\cdot(\bar{x}-x^*)\right\|_2^2 - c^2\right)\\
<&\left\|F(\bar{x}) - y^\delta\right\|^2_\mathcal{Y}
\end{aligned}
\end{equation*}
in contradiction to \eqref{38}.
This argument shows that $ x_m\to \bar{x} $.
\end{proof}

\begin{remark}\label{R3}\rm
While deep neural networks are inherently black-box models, the approximate support matrix $ \wit{\bds{S}} $ provided by $\mathcal{M}_{\wih{\Theta}}$ may violate the assumption $ \mathrm{supp}(\bds{S})\subset\mathrm{supp}(\wit{\bds{S}}) $ of the variational regularization method \eqref{32} (see Subsection \ref{S322} for more details).
Fortunately, Theorem \ref{thm52} establishes the solution's continuous dependence on $ \mathrm{supp}(\bds{S}) $.
This allows using $ \wit{\bds{S}} $ in our method for solving the EIT problem, provided it approximates the exact support matrix $ \bds{S} $ sufficiently well, as this is precisely the training purpose of $ \mathcal{M}_{\widehat{\Theta}} $.
The training performance is evaluated in Subsection \ref{S52}.
\end{remark}%

\subsection{Convergence analysis}\label{A2}

The next theorem shows that solutions of \eqref{32} converge toward an $ x^* $-$ \mathcal{S} $-minimum-norm solution of the inverse problem \eqref{31}.

\begin{theorem}[convergence]\label{thm53}
Let $ y^\delta\in \mathcal{Y} $ with $ \left\|y - y^\delta\right\|_\mathcal{Y}\leq \delta $ and let $ \alpha(\delta) $ be such that $ \alpha(\delta)\to 0 $ and $ \delta^2/\alpha(\delta) \to 0 $ as $ \delta\to 0 $.
Then every sequence $ \{x_{\alpha_k}^{\delta_k}\} $, where $ \delta_k\to 0 $, $ \alpha_k:= \alpha(\delta_k) $ and $ x_{\alpha_k}^{\delta_k} $ is a solution of \eqref{32}, has a convergent subsequence. The limit of every convergent subsequence is an $ x^* $-$\mathcal{S}$-minimum-norm solution. 
If in addition, the $ x^* $-$\mathcal{S}$-minimum-norm solution $ x^\dagger $ is unique, then
\begin{displaymath}
\lim_{\delta\to 0}x^\delta_{\alpha(\delta)} = x^\dagger.
\end{displaymath}
\end{theorem}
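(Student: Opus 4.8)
The plan is to follow the classical convergence argument for Tikhonov-type regularization (as in \cite{EHW96}), adapted to the weighted norm $\|\cdot\|_{\mathcal{S},\alpha}$, with the crucial new ingredient being the control of the reconstruction outside $\mathcal{S}$. First I would exploit the minimality of $x_{\alpha_k}^{\delta_k}$ by comparing it with the exact $x^*$-$\mathcal{S}$-minimum-norm solution $x^\dagger$. Since $F(x^\dagger) = y$, $\|y - y^{\delta_k}\|_\mathcal{Y} \leq \delta_k$, and $\mathrm{supp}(x^\dagger - x^*) \subset \mathcal{S}$ (so that $\|x^\dagger - x^*\|_{\mathcal{S}^-,2} = 0$ and $\|x^\dagger - x^*\|^2_{\mathcal{S},\alpha_k} = \alpha_k\|x^\dagger - x^*\|^2_2$), the defining inequality for a minimizer yields
\[
\|F(x_{\alpha_k}^{\delta_k}) - y^{\delta_k}\|^2_\mathcal{Y} + \|x_{\alpha_k}^{\delta_k} - x^*\|^2_{\mathcal{S},\alpha_k} \leq \delta_k^2 + \alpha_k\|x^\dagger - x^*\|^2_2.
\]
Expanding the weighted norm as $\alpha_k\|\cdot\|^2_{\mathcal{S}^+,2} + \|\cdot\|^2_{\mathcal{S}^-,2}$ and dividing by $\alpha_k$, the hypothesis $\delta_k^2/\alpha_k \to 0$ gives simultaneously that $\|x_{\alpha_k}^{\delta_k} - x^*\|_{\mathcal{S}^+,2}$ stays bounded and that $\|x_{\alpha_k}^{\delta_k} - x^*\|^2_{\mathcal{S}^-,2} \leq \alpha_k\bigl(\delta_k^2/\alpha_k + \|x^\dagger - x^*\|^2_2\bigr) \to 0$. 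This is the step where the support penalty does its work: the mass of the reconstruction outside $\mathcal{S}$ vanishes in the limit.

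The resulting bound on $\|x_{\alpha_k}^{\delta_k}\|_2$ and the reflexivity of $L^2(\Omega)$ then let me extract a subsequence $x_m \rightharpoonup \bar{x}$; weak closedness of $F$ gives $F(x_m) \rightharpoonup F(\bar{x})$, while $\|F(x_m) - y^{\delta_m}\|_\mathcal{Y} \to 0$ forces $F(x_m) \to y$ strongly, hence $F(\bar{x}) = y$. To show $\mathrm{supp}(\bar{x}) \subset \mathcal{S}$ I would invoke weak lower semicontinuity of the seminorm $x \mapsto \|x\|_{\mathcal{S}^-,2}$: since $\|x_m - x^*\|_{\mathcal{S}^-,2} \to 0$, we obtain $\|\bar{x} - x^*\|_{\mathcal{S}^-,2} = 0$, and because $x^*$ vanishes outside $\mathcal{S}$, so does $\bar{x}$. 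Consequently $\|\bar{x} - x^*\|_2 = \|\bar{x} - x^*\|_{\mathcal{S}^+,2}$, and passing to the $\liminf$ in the divided estimate above yields $\|\bar{x} - x^*\|_2^2 \leq \|x^\dagger - x^*\|_2^2$; combined with $\bar{x}$ being feasible (a solution supported in $\mathcal{S}$) and the definition of the $x^*$-$\mathcal{S}$-minimum-norm solution, this forces equality, so $\bar{x}$ is itself such a solution.

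For strong convergence I would upgrade weak to strong along the subsequence via norm convergence: the same inequalities give $\limsup_m \|x_m - x^*\|_{\mathcal{S}^+,2}^2 \leq \|x^\dagger - x^*\|_2^2 = \|\bar{x} - x^*\|_{\mathcal{S}^+,2}^2$, which together with weak lower semicontinuity pins down $\|x_m - x^*\|_{\mathcal{S}^+,2} \to \|\bar{x} - x^*\|_{\mathcal{S}^+,2}$; since also $\|x_m - x^*\|_{\mathcal{S}^-,2} \to 0$, we conclude $\|x_m - x^*\|_2 \to \|\bar{x} - x^*\|_2$. Weak convergence plus convergence of norms in the Hilbert space $L^2(\Omega)$ then yields $x_m \to \bar{x}$ strongly. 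The final uniqueness claim follows from a standard subsequence-of-subsequence argument: if $x^\dagger$ is the unique $x^*$-$\mathcal{S}$-minimum-norm solution, every subsequence of $\{x_{\alpha(\delta)}^{\delta}\}$ admits a further subsequence converging (by the above) to $x^\dagger$, so the whole family converges to $x^\dagger$. I expect the main obstacle to lie not in any single analytic estimate but in the careful bookkeeping of the two weighted seminorms $\|\cdot\|_{\mathcal{S}^+,2}$ and $\|\cdot\|_{\mathcal{S}^-,2}$ — in particular ensuring that the support inclusion $\mathrm{supp}(\bar{x}) \subset \mathcal{S}$ survives the weak limit and that the minimum-norm inequality is extracted with the correct use of the balance condition $\delta_k^2/\alpha_k \to 0$.
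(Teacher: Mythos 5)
Your proposal is correct and follows essentially the same route as the paper's proof: comparison with $x^\dagger$ in the minimality inequality, extraction of the three limits $F(x_{\alpha_k}^{\delta_k})\to y$, $\|x_{\alpha_k}^{\delta_k}\|_{\mathcal{S}^-,2}\to 0$ and $\limsup\|x_{\alpha_k}^{\delta_k}-x^*\|_{\mathcal{S}^+,2}\leq\|x^\dagger-x^*\|_{\mathcal{S}^+,2}$, weak compactness plus weak closedness of $F$, identification of the weak limit as an $x^*$-$\mathcal{S}$-minimum-norm solution, and a subsequence argument for the uniqueness claim. Your upgrade to strong convergence via weak convergence plus convergence of norms (Radon--Riesz) is equivalent to the paper's explicit inner-product expansion, and your explicit weak-lower-semicontinuity argument for $\mathrm{supp}(\bar{x})\subset\mathcal{S}$ merely spells out a step the paper asserts.
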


\begin{proof}
Let $ \alpha $ and $ x_{\alpha_k}^{\delta_k} $ be as above and let $ x^\dagger $ be an $ x^* $-$\mathcal{S}$-minimum-norm solution. 
Then by the definition of $ x_{\alpha_k}^{\delta_k} $
\begin{displaymath}
\begin{aligned}
\left\|F(x_{\alpha_k}^{\delta_k}) - y^{\delta_k}\right\|^2_\mathcal{Y} + \left\|x^{\delta_k}_{\alpha_k} - x^*\right\|_{\mathcal{S}, \alpha}^2
=
&\left\|F(x_{\alpha_k}^{\delta_k}) - y^{\delta_k}\right\|^2_\mathcal{Y} + \alpha_k\left\|x^{\delta_k}_{\alpha_k} - x^*\right\|_{\mathcal{S}^+, 2}^2 + \left\|x^{\delta_k}_{\alpha_k}\right\|_{\mathcal{S}^-, 2}^2\\
\leq 
&\left\|F(x^\dagger) - y^{\delta_k}\right\|^2_\mathcal{Y} + \alpha_k\left\|x^\dagger - x^*\right\|_{\mathcal{S}^+, 2}^2 + \left\|x^\dagger\right\|_{\mathcal{S}^-, 2}^2\\
\leq
&\delta_k^2 + \alpha_k\left\|x^\dagger - x^*\right\|_{\mathcal{S}^+, 2}^2.
\end{aligned}
\end{displaymath}
and hence
\begin{equation}\label{311}
\lim_{k\to \infty}F(x_{\alpha_k}^{\delta_k}) = y,\qquad \lim_{k\to \infty}\left\|x^{\delta_k}_{\alpha_k}\right\|_{\mathcal{S}^-, 2} = 0
\end{equation}
and
\begin{equation}\label{312}
\underset{k\to\infty}{\lim\sup}\left\|x_{\alpha_k}^{\delta_k} - x^*\right\|_{\mathcal{S}^+, 2} \leq \left\|x^\dagger - x^*\right\|_{\mathcal{S}^+, 2}.
\end{equation}
Therefore, $ \{x_{\alpha_k}^{\delta_k}\} $ is bounded. Hence, there exist an element $ x\in\mathcal{X} $ and a subsequence again denoted by $ \{x_{\alpha_k}^{\delta_k}\} $ such that
\begin{equation}\label{313}
x_{\alpha_k}^{\delta_k}\rightharpoonup x\quad \mathrm{as}\quad k\to \infty.
\end{equation}
This together with \eqref{311} implies that $ x\in\mathcal{D}(F) $, $ \mathrm{supp}(x)\subset\mathcal{S} $ and that $ F(x) = y $. Now by the weak lower semicontinuity of the norm, \eqref{312}, \eqref{313} and the definition of $ x^\dagger $,
\begin{equation*}
\left\|x - x^*\right\|_{\mathcal{S}^+, 2} \leq \underset{k\to\infty}{\lim\sup}\left\|x_{\alpha_k}^{\delta_k} - x^*\right\|_{\mathcal{S}^+, 2} \leq \left\|x^\dagger - x^*\right\|_{\mathcal{S}^+, 2} \leq \left\|x - x^*\right\|_{\mathcal{S}^+, 2}.
\end{equation*}
This together with $ \mathrm{supp}(x)\subset\mathcal{S} $ implies that $ \left\|x - x^*\right\|_2 = \left\|x^\dagger - x^*\right\|_2 $, i.e., $ x $ is also an $ x^* $-$\mathcal{S}$-minimum-norm solution. From \eqref{312}, \eqref{313} and the fact that
\begin{displaymath}
\left\|x_{\alpha_k}^{\delta_k} - x\right\|_2^2 = \left\|x_{\alpha_k}^{\delta_k} - x^*\right\|_2^2 + \left\|x^* - x\right\|_2^2 + 2\langle x_{\alpha_k}^{\delta_k} - x^*, x^* - x\rangle
\end{displaymath}
yields that
\begin{displaymath}
\underset{k\to\infty}{\lim\sup}\left\|x_{\alpha_k}^{\delta_k} - x\right\|_2^2 \leq 2\left\|x^* - x\right\|_2^2 + 2\lim_{k\to \infty}\langle x_{\alpha_k}^{\delta_k} - x^*, x^* - x\rangle = 0
\end{displaymath}
and hence that
\begin{displaymath}
\lim_{k\to \infty}x_{\alpha_k}^{\delta_k}\to x.
\end{displaymath}
If $ x^\dagger $ is unique, the assertion about the convergence of $ x^\delta_{\alpha(\delta)} $ follows from the fact that every sequence has a subsequence converging towards $ x^\dagger $.
\end{proof}

We now turn to the convergence rate analysis. In the next theorem, we will give sufficient conditions for the rate $ \left\|x_\alpha^\delta - x^\dagger\right\|_{\mathcal{S}^+, 2} = O(\sqrt{\delta}) $ and $ \left\|x_\alpha^\delta - x^\dagger\right\|_{\mathcal{S}^-, 2} = O(\delta) $. 
These conditions will also imply the rate $ \left\|F(x_\alpha^\delta) - y\right\|_\mathcal{Y} = O(\delta) $ for the residual.

\begin{theorem}[convergence rate]\label{thm54}
Let $\mathcal{D}(F)$ be convex, let $ y^\delta\in \mathcal{Y} $ with $ \left\|y-y^\delta\right\|_\mathcal{Y}\leq\delta $ and let $ x^\dagger $ be an $ x^* $-$\mathcal{S}$-minimum-norm solution. Moreover, let the following conditions hold:
\begin{enumerate}[(i)]
\item F is Fr\'{e}chet-differentiable,
\item there exist $ \gamma\geq 0 $ such that $ \left\|F'(x^\dag) - F'(x)\right\|_\mathcal{Y} \leq \gamma\left\|x^\dagger - x\right\|_2 $ for all $ x\in\mathcal{D}(F) $ in a sufficiently large ball around $ x^\dagger $,
\item there exists $ \omega\in \mathcal{Y} $ satisfying $ x^\dagger - x^* = F'(x^\dagger)^*\omega $ and
\item  $\gamma\left\|\omega\right\|_\mathcal{Y}<1$.
\end{enumerate}
Then for the choice $ \alpha\sim\delta $, we obtain
\begin{displaymath}
\left\|x_\alpha^\delta - x^\dagger\right\|_{\mathcal{S}^+, 2} = O(\sqrt{\delta}),\quad \left\|x_\alpha^\delta - x^\dagger\right\|_{\mathcal{S}^-, 2} = O(\delta)\quad \mathrm{and} \quad \left\|F(x_\alpha^\delta) - y\right\|_\mathcal{Y} = O(\delta)
\end{displaymath}
\end{theorem}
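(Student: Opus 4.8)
The plan is to follow the classical variational convergence-rate argument (in the spirit of \cite{EHW96}), but to track the two pieces $\|\cdot\|_{\mathcal{S}^+,2}$ and $\|\cdot\|_{\mathcal{S}^-,2}$ separately, since they enter the regularization functional with different weights ($\alpha$ versus $1$) and will therefore acquire different rates. Throughout I write $x:=x_\alpha^\delta$ for brevity. First I would exploit the minimality of $x$ against the comparison element $x^\dagger$. Because $F(x^\dagger)=y$ with $\|y-y^\delta\|_\mathcal{Y}\le\delta$, and because both $x^*$ and $x^\dagger$ are supported in $\mathcal{S}$ (so that $\|x^\dagger-x^*\|_{\mathcal{S}^-,2}=0$ and $\|x-x^*\|_{\mathcal{S}^-,2}=\|x-x^\dagger\|_{\mathcal{S}^-,2}$), the minimality inequality reduces to
\[
\|F(x)-y^\delta\|_\mathcal{Y}^2+\alpha\|x-x^*\|_{\mathcal{S}^+,2}^2+\|x-x^\dagger\|_{\mathcal{S}^-,2}^2\le \delta^2+\alpha\|x^\dagger-x^*\|_{\mathcal{S}^+,2}^2 .
\]
Expanding $\|x-x^*\|_{\mathcal{S}^+,2}^2=\|x-x^\dagger\|_{\mathcal{S}^+,2}^2+2\langle x-x^\dagger,\,x^\dagger-x^*\rangle+\|x^\dagger-x^*\|_{\mathcal{S}^+,2}^2$ (the cross term may be taken over all of $\Omega$ since $x^\dagger-x^*$ is supported in $\mathcal{S}$) cancels the $\|x^\dagger-x^*\|_{\mathcal{S}^+,2}^2$ contributions and leaves an inequality in which only $\|x-x^\dagger\|_{\mathcal{S}^+,2}$, $\|x-x^\dagger\|_{\mathcal{S}^-,2}$, $\|F(x)-y^\delta\|_\mathcal{Y}$, and the cross term survive.

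Next I would estimate the cross term through the source condition (iii): $\langle x-x^\dagger,\,x^\dagger-x^*\rangle=\langle F'(x^\dagger)(x-x^\dagger),\,\omega\rangle_\mathcal{Y}$. Writing $F'(x^\dagger)(x-x^\dagger)=(F(x)-y)-\rho$ with $\rho:=F(x)-F(x^\dagger)-F'(x^\dagger)(x-x^\dagger)$, the hypotheses (i)--(ii) together with convexity of $\mathcal{D}(F)$ yield the standard Taylor remainder bound $\|\rho\|_\mathcal{Y}\le\tfrac{\gamma}{2}\|x-x^\dagger\|_2^2$ (integrate $F'$ along the segment from $x^\dagger$ to $x$, which stays in the ball by convexity). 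Combining this with $\|F(x)-y\|_\mathcal{Y}\le\|F(x)-y^\delta\|_\mathcal{Y}+\delta$ and $\|x-x^\dagger\|_2^2=\|x-x^\dagger\|_{\mathcal{S}^+,2}^2+\|x-x^\dagger\|_{\mathcal{S}^-,2}^2$, the cross term feeds $\gamma\|\omega\|_\mathcal{Y}$-multiples of both squared support pieces back into the inequality.

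Finally I would collect terms so that the left side carries $\alpha(1-\gamma\|\omega\|_\mathcal{Y})\|x-x^\dagger\|_{\mathcal{S}^+,2}^2$ and $(1-\alpha\gamma\|\omega\|_\mathcal{Y})\|x-x^\dagger\|_{\mathcal{S}^-,2}^2$, both coefficients positive (the first directly by (iv), the second for $\alpha$ sufficiently small), and absorb the residual term $2\alpha\|\omega\|_\mathcal{Y}\|F(x)-y^\delta\|_\mathcal{Y}$ into $\tfrac12\|F(x)-y^\delta\|_\mathcal{Y}^2$ by Young's inequality. With the calibration $\alpha\sim\delta$ the right-hand side is $O(\delta^2)$, and reading off the three surviving left-hand terms gives $\|x-x^\dagger\|_{\mathcal{S}^+,2}^2=O(\delta^2/\alpha)=O(\delta)$, hence $O(\sqrt\delta)$; $\|x-x^\dagger\|_{\mathcal{S}^-,2}^2=O(\delta^2)$, hence $O(\delta)$; and $\|F(x)-y^\delta\|_\mathcal{Y}=O(\delta)$, whence $\|F(x)-y\|_\mathcal{Y}=O(\delta)$.

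The single genuine subtlety, and the feature distinguishing this result from classical Tikhonov regularization, is precisely that the two support pieces carry weights $\alpha$ and $1$: dividing the $\mathcal{S}^+$-estimate by $\alpha$ costs a factor $\delta^{-1}$ and degrades the rate there to $O(\sqrt\delta)$, whereas the $\mathcal{S}^-$-piece retains the full $O(\delta^2)$ bound and thus enjoys the faster $O(\delta)$ rate. Getting the bookkeeping of coefficients right, so that positivity and the correct powers of $\alpha$ are preserved at each absorption step, is where the care is concentrated. A secondary technical point is the a priori guarantee that $x_\alpha^\delta$ eventually lies in the ball where (ii) is assumed to hold; for the range $\alpha\sim\delta$ this follows from the convergence established in Theorem \ref{thm53}.
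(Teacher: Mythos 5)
Your proposal is correct and follows essentially the same route as the paper's proof: minimality against $x^\dagger$ with the support observation $\|x^\dagger-x^*\|_{\mathcal{S}^-,2}=0$, expansion of the weighted square to isolate the cross term, the source condition (iii) plus the Taylor remainder bound $\|\rho\|_\mathcal{Y}\le\tfrac{\gamma}{2}\|x_\alpha^\delta-x^\dagger\|_2^2$ from (i)--(ii) and convexity, and the same coefficient bookkeeping yielding $\alpha(1-\gamma\|\omega\|_\mathcal{Y})$ on the $\mathcal{S}^+$ piece and $1-\alpha\gamma\|\omega\|_\mathcal{Y}$ on the $\mathcal{S}^-$ piece before calibrating $\alpha\sim\delta$. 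Your only deviations are cosmetic: you absorb $2\alpha\|\omega\|_\mathcal{Y}\|F(x_\alpha^\delta)-y^\delta\|_\mathcal{Y}$ by Young's inequality where the paper completes the square, and you justify the a priori ball membership via Theorem \ref{thm53} (valid, since $\alpha\sim\delta$ satisfies its hypotheses and every limit point lies within $2\|x^\dagger-x^*\|_2$ of $x^\dagger$) where the paper reads it off directly from its intermediate inequality.
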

\begin{proof}
Since $ x^\delta_\alpha $ is a minimizer of \eqref{32}, $ F(x^\dagger) = y $ and $ \left\|y-y^\delta\right\|\leq\delta $ imply that
\begin{displaymath}
\left\|F(x_{\alpha}^{\delta}) - y^\delta\right\|^2_\mathcal{Y} + \alpha\left\|x_{\alpha}^{\delta} - x^*\right\|_{\mathcal{S}^+, 2}^2 + \left\|x_{\alpha}^{\delta} - x^*\right\|_{\mathcal{S}^-, 2}^2
\leq 
\delta^2 + \alpha\left\|x^\dagger - x^*\right\|_{\mathcal{S}^+, 2}^2
\end{displaymath}
and hence that
\begin{equation}\label{314}
\begin{aligned}
&\left\|F(x_{\alpha}^{\delta}) - y^\delta\right\|^2_\mathcal{Y} + \alpha\left\|x_{\alpha}^{\delta} - x^\dagger\right\|_{\mathcal{S}^+, 2}^2 + \left\|x_{\alpha}^{\delta} - x^\dagger\right\|_{\mathcal{S}^-, 2}^2\\
\leq 
&\delta^2 + \alpha\left(\left\|x^\dagger - x^*\right\|_{\mathcal{S}^+, 2}^2 + \left\|x_{\alpha}^{\delta} - x^\dagger\right\|_{\mathcal{S}^+, 2}^2 - \left\|x_{\alpha}^{\delta} - x^*\right\|_{\mathcal{S}^+, 2}^2\right)\\
=
&\delta^2 + 2\alpha\left\langle x^\dagger - x^*, x^\dagger - x_{\alpha}^{\delta}\right\rangle.
\end{aligned}
\end{equation}
Let now $ \alpha\sim\delta $. Then \eqref{314} implies that $ x^\delta_\alpha\in B_\rho(x^\dagger) := \{x: \|x-x^\dagger\|_2<\rho\} $ for any fixed $ \rho > 2\left\|x^\dagger - x^*\right\|_2 $, provided $\delta$ is sufficiently small, which we assume to hold in the following. Therefore, condition \textit{(ii)} is applicable assuming that it holds for all $ x\in \mathcal{D}(F)\cap B_\rho(x^\dagger) $. Note that conditions \textit{(i)} and \textit{(ii)} imply that
\begin{equation}\label{315}
F(x^\delta_\alpha) = F(x^\dagger) + F'(x^\dagger)(x^\delta_\alpha - x^\dagger) + r^\delta_\alpha
\end{equation}
holds with
\begin{equation}\label{316}
\left\|r^\delta_\alpha\right\|_\mathcal{Y} \leq \dfrac{\gamma}{2}\left\|x^\delta_\alpha - x^\dagger\right\|_2^2.
\end{equation}
By condition (iii), \eqref{314} implies that
\begin{equation}\label{317}
\left\|F(x_{\alpha}^{\delta}) - y^\delta\right\|^2_\mathcal{Y} + \alpha\left\|x_{\alpha}^{\delta} - x^\dagger\right\|_2^2 + (1-\alpha)\left\|x_{\alpha}^{\delta} - x^\dagger\right\|_{\mathcal{S}^-, 2}^2\leq \delta^2 + 2\alpha\langle \omega, F'(x^\dagger)(x^\dagger - x_{\alpha}^{\delta})\rangle.
\end{equation}
Combining \eqref{315}--\eqref{317} leads to
\begin{displaymath}
\begin{aligned}
&\left\|F(x_{\alpha}^{\delta}) - y^\delta\right\|^2_\mathcal{Y} + \alpha\left\|x_{\alpha}^{\delta} - x^\dagger\right\|_2^2 + (1-\alpha)\left\|x_{\alpha}^{\delta} - x^\dagger\right\|_{\mathcal{S}^-, 2}^2\\
\leq 
&\delta^2 + 2\alpha\langle \omega, (y - y^\delta) + (y^\delta - F(x^\delta_\alpha)) + r^\delta_\alpha\rangle\\
\leq
&\delta^2 + 2\alpha\delta\left\|\omega\right\|_\mathcal{Y} + 2\alpha\left\|\omega\right\|_\mathcal{Y}\left\|F(x^\delta_\alpha) - y^\delta\right\| + \alpha\gamma\left\|\omega\right\|_\mathcal{Y}\left\|x^\delta_\alpha - x^\dagger\right\|^2_2
\end{aligned}
\end{displaymath}
and hence to
\begin{displaymath}
\left(\left\|F(x^\delta_\alpha) - y^\delta\right\|_\mathcal{Y} - \alpha\left\|\omega\right\|_\mathcal{Y}\right)^2 + \alpha(1 - \gamma\left\|\omega\right\|_\mathcal{Y})\left\|x^\delta_\alpha - x^\dagger\right\|^2_2 + (1-\alpha)\left\|x_{\alpha}^{\delta} - x^\dagger\right\|_{\mathcal{S}^-, 2}^2\leq (\delta + \alpha\left\|\omega\right\|_\mathcal{Y})^2
\end{displaymath}
This together with (iv) implies that
\begin{displaymath}
\left\|F(x^\delta_\alpha) - y^\delta\right\|_\mathcal{Y} \leq \delta + 2\alpha\left\|\omega\right\|_\mathcal{Y},\qquad \left\|x_{\alpha}^{\delta} - x^\dagger\right\|_{\mathcal{S}^-, 2} \leq \dfrac{\delta + \alpha\left\|\omega\right\|_\mathcal{Y}}{\sqrt{1-\alpha}}
\end{displaymath}
and that
\begin{displaymath}
\left\|x_{\alpha}^{\delta} - x^\dagger\right\|_{\mathcal{S}^+, 2} \leq \left\|x_{\alpha}^{\delta} - x^\dagger\right\|_2\leq \dfrac{\delta + \alpha\left\|\omega\right\|_\mathcal{Y}}{\sqrt{\alpha}\left(1 - \gamma\left\|\omega\right\|_\mathcal{Y}\right)^{\frac{1}{2}}}
\end{displaymath}
The assertion now follows with $ \alpha\sim\delta $.
\end{proof}

\begin{remark}\rm\label{R4}
As discussed before, the variational regularization method \eqref{32} has a similar structure with classical Tikhonov regularization, which under standard assumptions achieves $ \left\|x_\alpha^\delta - x^\dagger\right\|_2 = O(\sqrt{\delta}) $ \cite{EHW96}.
The above theorem shows that our method achieves a convergence rate of $ \left\|x_\alpha^\delta - x^\dagger\right\|_{\mathcal{S}^+, 2} = O(\sqrt{\delta}) $ and $ \left\|x_\alpha^\delta - x^\dagger\right\|_{\mathcal{S}^-, 2} = O(\delta) $, which is faster than the classical Tikhonov regularization (see Subsection \ref{S54} for numerical validation).
Such a theoretical result verifies the benefit introduced by $ \mathcal{S}\supset\mathrm{supp}(x^\dagger) $.
Moreover, we note that the convergence rate improves as the \textit{a priori} given $\mathcal{S}$ better approximates $ \mathrm{supp}(x^\dagger) $ (i.e., $ \wit{\bds{S}} $ approximates $ \bds{S} $ in Section \ref{S4}), which is exactly the training purpose of our deep neural network $\mathcal{M}_{\wih{\Theta}}$ (see Subsection \ref{S322}).
\end{remark}

The following proposition indicates that a solution of the variational problem \eqref{32} should be small outside $\mathcal{S}$ in the sense of $ L^2 $-norm as $ \alpha\to 0 $ and $ y^\delta\to y $.

\begin{prop}\label{P1}
Let $ y^\delta\in \mathcal{Y} $ with $ \left\|y - y^\delta\right\|_\mathcal{Y}\leq \delta $, $ \alpha>0 $ and $ x^\dagger $ be the exact solution of \eqref{31}, a solution $ x $ of \eqref{32} will satisfy
\begin{displaymath}
\int_{\Omega\setminus\mathcal{S}}|x(\lambda)|^2d\lambda \leq \delta^2 + \alpha\left\|x^\dagger - x^*\right\|_{\mathcal{S}^+, 2}^2.
\end{displaymath}
\end{prop}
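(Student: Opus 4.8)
The plan is to exploit the defining minimality of $x$ in \eqref{32} by testing it against the exact solution $x^\dagger$ as a competitor, and then to extract the bound on $\|x\|_{\mathcal{S}^-,2}^2 = \int_{\Omega\setminus\mathcal{S}}|x(\lambda)|^2\,d\lambda$ by discarding nonnegative terms on the minimizer side and simplifying the competitor side via the two support hypotheses. Since $x^\dagger\in\mathcal{D}(F)$ is admissible in \eqref{32}, minimality immediately yields
$$
\left\|F(x) - y^\delta\right\|_\mathcal{Y}^2 + \left\|x - x^*\right\|_{\mathcal{S},\alpha}^2 \leq \left\|F(x^\dagger) - y^\delta\right\|_\mathcal{Y}^2 + \left\|x^\dagger - x^*\right\|_{\mathcal{S},\alpha}^2 .
$$

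First I would bound the right-hand side. Because $F(x^\dagger) = y$ and $\left\|y - y^\delta\right\|_\mathcal{Y}\leq\delta$, the fidelity term satisfies $\left\|F(x^\dagger) - y^\delta\right\|_\mathcal{Y}^2\leq\delta^2$. For the regularization term I would invoke that both $\mathrm{supp}(x^\dagger)$ and $\mathrm{supp}(x^*)$ lie in $\mathcal{S}$, so $x^\dagger - x^*$ vanishes on $\Omega\setminus\mathcal{S}$; by the decomposition $\left\|\cdot\right\|_{\mathcal{S},\alpha}^2 = \alpha\left\|\cdot\right\|_{\mathcal{S}^+,2}^2 + \left\|\cdot\right\|_{\mathcal{S}^-,2}^2$ recorded just before the statement, this collapses to $\left\|x^\dagger - x^*\right\|_{\mathcal{S},\alpha}^2 = \alpha\left\|x^\dagger - x^*\right\|_{\mathcal{S}^+,2}^2$. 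Together these give the upper bound $\delta^2 + \alpha\left\|x^\dagger - x^*\right\|_{\mathcal{S}^+,2}^2$.

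Next I would bound the left-hand side from below. Dropping the nonnegative fidelity term together with the nonnegative contribution $\alpha\left\|x - x^*\right\|_{\mathcal{S}^+,2}^2$ leaves $\left\|x - x^*\right\|_{\mathcal{S},\alpha}^2\geq\left\|x - x^*\right\|_{\mathcal{S}^-,2}^2$. The closing observation is that $\mathrm{supp}(x^*)\subset\mathcal{S}$ forces $x^*\equiv 0$ on $\Omega\setminus\mathcal{S}$, whence $\left\|x - x^*\right\|_{\mathcal{S}^-,2}^2 = \int_{\Omega\setminus\mathcal{S}}|x(\lambda)|^2\,d\lambda$. Chaining the lower and upper estimates produces the claimed inequality.

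This is essentially a single-line variational comparison, so I do not expect a genuine obstacle; the only points requiring care are bookkeeping—checking that $x^\dagger$ is a legitimate test element (guaranteed by the standing assumption $x^\dagger\in\mathcal{D}(F)$ with $\mathrm{supp}(x^\dagger)\subset\mathcal{S}$) and correctly using the two support conditions to annihilate the $\mathcal{S}^-$ parts of both $x^\dagger - x^*$ and $x^*$. These support conditions are precisely the structural feature distinguishing the proposed weighted regularization from plain Tikhonov, and they are what make the exterior mass $\int_{\Omega\setminus\mathcal{S}}|x|^2$ vanish as $\delta,\alpha\to 0$.
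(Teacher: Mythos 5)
Your proposal is correct and follows essentially the same route as the paper: the paper's proof likewise tests the minimality of $x$ in \eqref{32} against the competitor $x^\dagger$, uses $\mathrm{supp}(x^*)\subset\mathcal{S}$ to identify $\left\|x-x^*\right\|_{\mathcal{S}^-,2}^2$ with $\int_{\Omega\setminus\mathcal{S}}|x(\lambda)|^2\,d\lambda$, uses $\mathrm{supp}(x^\dagger)\subset\mathcal{S}$ together with $F(x^\dagger)=y$ and $\left\|y-y^\delta\right\|_\mathcal{Y}\leq\delta$ to bound the competitor side by $\delta^2+\alpha\left\|x^\dagger-x^*\right\|_{\mathcal{S}^+,2}^2$, and then discards the nonnegative fidelity and $\mathcal{S}^+$ terms on the minimizer side. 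The only difference is presentational: the paper writes this as a single chained display, while you organize it as separate upper and lower estimates.
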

\begin{proof}
This directly follows from the inequality
\begin{displaymath}
\begin{aligned}
&\left\|F(x) - y^\delta\right\|^2 + \left\|x - x^*\right\|_{\mathcal{S}, \alpha}^2\\
=
&\left\|F(x) - y^\delta\right\|^2 + \alpha\int_{\mathcal{S}}|x(\lambda) - x^*(\lambda)|^2d\lambda + \int_{\Omega\setminus\mathcal{S}}|x(\lambda) - x^*(\lambda)|^2d\lambda\\
=
&\left\|F(x) - y^\delta\right\|^2 + \alpha\int_{\mathcal{S}}|x(\lambda) - x^*(\lambda)|^2d\lambda + \int_{\Omega\setminus\mathcal{S}}|x(\lambda)|^2d\lambda\\
\leq 
&\left\|F(x^\dagger) - y^\delta\right\|^2 + \alpha\int_{\mathcal{S}}|x^\dagger(\lambda) - x^*(\lambda)|^2d\lambda + \int_{\Omega\setminus\mathcal{S}}|x^\dagger(\lambda)|^2d\lambda\\
\leq
&\delta^2 + \alpha\int_{\mathcal{S}}|x^\dagger(\lambda) - x^*(\lambda)|^2d\lambda.
\end{aligned}
\end{displaymath}
\end{proof}

\addcontentsline{toc}{section}{References}


\begin{thebibliography}{10}
	\providecommand{\url}[1]{\texttt{#1}}
	\providecommand{\urlprefix}{URL }
	\expandafter\ifx\csname urlstyle\endcsname\relax
	\providecommand{\doi}[1]{doi:\discretionary{}{}{}#1}\else
	\providecommand{\doi}{doi:\discretionary{}{}{}\begingroup
		\urlstyle{rm}\Url}\fi
	
	\bibitem{AAG15}
	A.~Adler, R.~Gaburro, and W.~Lionheart, Electrical impedance tomography,
	\textit{Handbook of mathematical methods in imaging. {V}ol. 1, 2, 3},
	Springer, New York, 2015. 701--762.
	
	\bibitem{AAL06}
	A.~Adler and W.~R.~B. Lionheart, Uses and abuses of {EIDORS}: an extensible
	software base for {EIT}, \textit{Physiol. Meas.} \textbf{27} (2006),
	S25--S42.
	
	\bibitem{AGS16}
	G.~S. Alberti, H.~Ammari, B.~Jin, J.-K. Seo, and W.~Zhang, The linearized
	inverse problem in multifrequency electrical impedance tomography,
	\textit{SIAM J. Imaging Sci.} \textbf{9} (2016), 1525--1551.
	
	\bibitem{AMH17}
	M.~Alsaker, S.~J. Hamilton, and A.~Hauptmann, A direct {D}-bar method for
	partial boundary data electrical impedance tomography with a priori
	information, \textit{Inverse Probl. Imaging} \textbf{11} (2017), 427--454.
	
	\bibitem{AMM16}
	M.~Alsaker and J.~L. Mueller, A {D}-bar algorithm with a priori information for
	2-dimensional electrical impedance tomography, \textit{SIAM J. Imaging Sci.}
	\textbf{9} (2016), 1619--1654.
	
	\bibitem{AMM18}
	M.~Alsaker and J.~L. Mueller, Use of an optimized spatial prior in {D}-bar
	reconstructions of {EIT} tank data, \textit{Inverse Probl. Imaging}
	\textbf{12} (2018), 883--901.
	
	\bibitem{AHK04}
	H.~Ammari and H.~Kang, \textit{Reconstruction of small inhomogeneities from
		boundary measurements}, \textit{Lecture Notes in Mathematics}, vol. 1846,
	x+238 , Springer-Verlag, Berlin, 2004.
	
	\bibitem{AHK07}
	H.~Ammari and H.~Kang, \textit{Polarization and moment tensors},
	\textit{Applied Mathematical Sciences}, vol. 162, x+312 , Springer, New York,
	2007.
	
	\bibitem{AHB14}
	H.~Ammari, T.~Boulier, J.~Garnier, and H.~Wang, Shape recognition and
	classification in electro-sensing, \textit{Proc. Natl. Acad. Sci. USA}
	\textbf{111} (2014), 11652--11657.
	
	\bibitem{ASM19}
	S.~Arridge, P.~Maass, O.~\"Oktem, and C.-B. Sch\"onlieb, Solving inverse
	problems using data-driven models, \textit{Acta Numer.} \textbf{28} (2019),
	1--174.
	
	\bibitem{AKP06}
	K.~Astala and L.~P\"aiv\"arinta, Calder\'on's inverse conductivity problem in
	the plane, \textit{Ann. of Math.} \textbf{163} (2006), 265--299.
	
	\bibitem{ANJ95}
	N.~J. Avis and D.~C. Barber, Incorporating a-priori information into the
	sheffield filtered backprojection algorithm, \textit{Physiol. Meas.}
	\textbf{16} (1995), A111--A122.
	
	\bibitem{BUE98}
	U.~Baysal and B.~M. Eyüboglu, Use of a priori information in estimating tissue
	resistivities - a simulation study, \textit{Phys. Med. Biol.} \textbf{43}
	(1998), 3589--3606.
	
	\bibitem{BJ09}
	J.~Bikowski, \textit{Electrical {I}mpedance {T}omography reconstructions in two
		and three dimensions: {F}rom {C}alder\'{o}n to direct methods}, 95 , ProQuest
	LLC, Ann Arbor, MI, 2009.
	
	\bibitem{BJM08}
	J.~Bikowski and J.~L. Mueller, 2{D} {EIT} reconstructions using {C}alder\'on's
	method, \textit{Inverse Probl. Imaging} \textbf{2} (2008), 43--61.
	
	\bibitem{BA24}
	A.~Bj\"orck, \textit{Numerical methods for least squares problems}, 2nd edn.,
	xiv+494 , Society for Industrial and Applied Mathematics (SIAM),
	Philadelphia, PA, 2024.
	
	\bibitem{BL02}
	L.~Borcea, Electrical impedance tomography, \textit{Inverse Problems}
	\textbf{18} (2002), R99--R136.
	
	\bibitem{CAP80}
	A.-P. Calder\'on, On an inverse boundary value problem, \textit{Seminar on
		{N}umerical {A}nalysis and its {A}pplications to {C}ontinuum {P}hysics ({R}io
		de {J}aneiro, 1980)}, Soc. Brasil. Mat., Rio de Janeiro, 1980. 65--73.
	
	\bibitem{CMM21}
	M.~Capps and J.~L. Mueller, Reconstruction of organ boundaries with deep
	learning in the {D}-{B}ar method for electrical impedance tomography,
	\textit{IEEE Trans. Biomed. Eng.} \textbf{68} (2021), 826--833.
	
	\bibitem{CSJ23}
	S.~Cen, B.~Jin, K.~Shin, and Z.~Zhou, Electrical impedance tomography with deep
	{C}alder\'on method, \textit{J. Comput. Phys.} \textbf{493} (2023), Paper No.
	112427, 14.
	
	\bibitem{CMI99}
	M.~Cheney, D.~Isaacson, and J.~C. Newell, Electrical impedance tomography,
	\textit{SIAM Rev.} \textbf{41} (1999), 85--101.
	
	\bibitem{CYT14}
	Y.~T. Chow, K.~Ito, and J.~Zou, A direct sampling method for electrical
	impedance tomography, \textit{Inverse Problems} \textbf{30} (2014), 095003,
	25.
	
	\bibitem{DHB99}
	H.~Dehghani, D.~C. Barber, and I.~Basarab-Horwath, Incorporating a priori
	anatomical information into image reconstruction in electrical impedance
	tomography, \textit{Physiol. Meas.} \textbf{20} (1999), 87--102.
	
	\bibitem{DDC94}
	D.~C. Dobson and F.~Santosa, An image-enhancement technique for electrical
	impedance tomography, \textit{Inverse Problems} \textbf{10} (1994), 317--334.
	
	\bibitem{EHW96}
	H.~W. Engl, M.~Hanke, and A.~Neubauer, \textit{Regularization of inverse
		problems}, Kluwer Academic Publishers Group, Dordrecht, 1996.
	
	\bibitem{EAG04}
	A.~Ern and J.-L. Guermond, \textit{Theory and practice of finite elements},
	\textit{Applied Mathematical Sciences}, vol. 159, xiv+524 , Springer-Verlag,
	New York, 2004.
	
	\bibitem{FYY20}
	Y.~Fan and L.~Ying, Solving electrical impedance tomography with deep learning,
	\textit{J. Comput. Phys.} \textbf{404} (2020), 109119, 19.
	
	\bibitem{FDG12}
	D.~Ferrario, B.~Grychtol, A.~Adler, J.~Sola, S.~H. Boehm, and M.~Bodenstein,
	Toward morphological thoracic {EIT}: Major signal sources correspond to
	respective organ locations in {CT}, \textit{IEEE Trans. Biomed. Eng.}
	\textbf{59} (2012), 3000--3008.
	
	\bibitem{FTD10}
	D.~Flores-Tapia and S.~Pistorius, Electrical impedance tomography
	reconstruction using a monotonicity approach based on a priori knowledge,
	\textit{2010 Annual International Conference of the IEEE Engineering in
		Medicine and Biology Society (EMBC)}, IEEE Engineering in Medicine and
	Biology Society Conference Proceedings, IEEE Engn Med \& Biol Soc (EMBS),
	2010, 4996--4999.
	
	\bibitem{GXB10}
	X.~Glorot and Y.~Bengio, Understanding the difficulty of training deep
	feedforward neural networks, Y.~W. Teh and M.~Titterington (eds.),
	\textit{Proceedings of the Thirteenth International Conference on Artificial
		Intelligence and Statistics}, \textit{Proceedings of Machine Learning
		Research}, vol.~9, PMLR, Chia Laguna Resort, Sardinia, Italy, 2010, 249--256.
	
	\bibitem{GBM07}
	B.~M. Graham, Enhancements in electrical impedance tomography (eit) image
	reconstruction for three-dimensional lung imaging, Ph.D. thesis, University
	of Ottawa (Canada), 2007.
	
	\bibitem{GRJ21}
	R.~Guo and J.~Jiang, Construct deep neural networks based on direct sampling
	methods for solving electrical impedance tomography, \textit{SIAM J. Sci.
		Comput.} \textbf{43} (2021), B678--B711.
	
	\bibitem{HMN23}
	M.~Haltmeier and L.~Nguyen, Regularization of inverse problems by neural
	networks, \textit{Handbook of mathematical models and algorithms in computer
		vision and imaging---mathematical imaging and vision}, Springer, Cham, [2023]
	\copyright 2023. 1065--1093.
	
	\bibitem{HSJ18}
	S.~J. Hamilton and A.~Hauptmann, Deep {D}-{B}ar: Real-time electrical impedance
	tomography imaging with deep neural networks, \textit{IEEE Trans. Med.
		Imaging} \textbf{37} (2018), 2367--2377.
	
	\bibitem{HMK18}
	M.~Hinze, B.~Kaltenbacher, and T.~N.~T. Quyen, Identifying conductivity in
	electrical impedance tomography with total variation regularization,
	\textit{Numer. Math.} \textbf{138} (2018), 723--765.
	
	\bibitem{IMR14}
	M.~R. Islam and M.~A. Kiber, Electrical impedance tomography imaging using
	{G}auss-{N}ewton algorithm, \textit{2014 International Conference on
		Informatics, Electronics \& Vision (ICIEV)}, 2014.
	
	\bibitem{IKJ15}
	K.~Ito and B.~Jin, \textit{Inverse problems}, \textit{Series on Applied
		Mathematics}, vol.~22, x+318 , World Scientific Publishing Co. Pte. Ltd.,
	Hackensack, NJ, 2015.
	
	\bibitem{JJK20}
	J.~Jauhiainen, P.~Kuusela, A.~Sepp\"anen, and T.~Valkonen, Relaxed
	{G}auss-{N}ewton methods with applications to electrical impedance
	tomography, \textit{SIAM J. Imaging Sci.} \textbf{13} (2020), 1415--1445.
	
	\bibitem{JBK12}
	B.~Jin, T.~Khan, and P.~Maass, A reconstruction algorithm for electrical
	impedance tomography based on sparsity regularization, \textit{Internat. J.
		Numer. Methods Engrg.} \textbf{89} (2012), 337--353.
	
	\bibitem{KJP99}
	J.~P. Kaipio, V.~Kolehmainen, M.~Vauhkonen, and E.~Somersalo, Inverse problems
	with structural prior information, \textit{Inverse Problems} \textbf{15}
	(1999), 713--729.
	
	\bibitem{KBN08}
	B.~Kaltenbacher, A.~Neubauer, and O.~Scherzer, \textit{Iterative regularization
		methods for nonlinear ill-posed problems}, Walter de Gruyter, Berlin, 2008.
	
	\bibitem{KDP14}
	D.~P. Kingma and J.~Ba, Adam: A method for stochastic optimization,
	\textit{arXiv:1412.6980}  (2014).
	
	\bibitem{KKL07}
	K.~Knudsen, M.~Lassas, J.~L. Mueller, and S.~Siltanen, D-bar method for
	electrical impedance tomography with discontinuous conductivities,
	\textit{SIAM J. Appl. Math.} \textbf{67} (2007), 893--913.
	
	\bibitem{LZZ24}
	K.~Li, B.~Zhang, and H.~Zhang, Reconstruction of inhomogeneous media by an
	iteration algorithm with a learned projector, \textit{Inverse Problems}
	\textbf{40} (2024), 075008.
	
	\bibitem{LDW23}
	D.~Liu, J.~Wang, Q.~Shan, D.~Smyl, J.~Deng, and J.~Du, {DeepEIT}: Deep image
	prior enabled electrical impedance tomography, \textit{IEEE Trans. Pattern
		Anal. Mach. Intell.} \textbf{45} (2023), 9627--9638.
	
	\bibitem{LS22}
	S.~Lunz, Learned regularizers for inverse problems, \textit{Handbook of
		Mathematical Models and Algorithms in Computer Vision and Imaging:
		Mathematical Imaging and Vision}, Springer, 2022. 1--21.
	
	\bibitem{MJL03}
	J.~L. Mueller and S.~Siltanen, Direct reconstructions of conductivities from
	boundary measurements, \textit{SIAM J. Sci. Comput.} \textbf{24} (2003),
	1232--1266.
	
	\bibitem{MJL12}
	J.~L. Mueller and S.~Siltanen, \textit{Linear and nonlinear inverse problems
		with practical applications}, \textit{Computational Science \&amp;
		Engineering}, vol.~10, xiv+351 , Society for Industrial and Applied
	Mathematics (SIAM), Philadelphia, PA, 2012.
	
	\bibitem{MJL20}
	J.~L. Mueller and S.~Siltanen, The {D}-bar method for electrical impedance
	tomography---demystified, \textit{Inverse Problems} \textbf{36} (2020),
	093001, 28.
	
	\bibitem{MRL20}
	R.~Murthy, Y.-H. Lin, K.~Shin, and J.~L. Mueller, A direct reconstruction
	algorithm for the anisotropic inverse conductivity problem based on
	{C}alder\'on's method in the plane, \textit{Inverse Problems} \textbf{36}
	(2020), 125008, 21.
	
	\bibitem{NAI96}
	A.~I. Nachman, Global uniqueness for a two-dimensional inverse boundary value
	problem, \textit{Ann. of Math. (2)} \textbf{143} (1996), 71--96.
	
	\bibitem{ROF15}
	O.~Ronneberger, P.~Fischer, and T.~Brox, U-net: Convolutional networks for
	biomedical image segmentation, \textit{International Conference on Medical
		image computing and computer-assisted intervention}, Springer, 2015,
	234--241.
	
	\bibitem{SKA21}
	K.~Shin, S.~U. Ahmad, and J.~L. Mueller, A three dimensional {C}alderon-based
	method for {EIT} on the cylindrical geometry, \textit{IEEE Trans. Biomed.
		Eng.} \textbf{68} (2021), 1487--1495.
	
	\bibitem{SKM20}
	K.~Shin and J.~L. Mueller, A second order {C}alder\'on's method with a
	correction term and {\it a priori} information, \textit{Inverse Problems}
	\textbf{36} (2020), 124005, 22.
	
	\bibitem{SKM21}
	K.~Shin and J.~L. Mueller, Calderon's method with a spatial prior for 2-{D}
	{EIT} imaging of ventilation and perfusion, \textit{Sensors} \textbf{21}
	(2021).
	
	\bibitem{SKA24}
	K.~Shin, S.~Ahmad, T.~B.~R. Santos, N.~Barbosa~da Rosa, Junior, and J.~L.
	Mueller, Comparison of two linearization-based methods for 3-{D} {EIT}
	reconstructions on a simulated chest, \textit{J. Math. Imaging Vision}
	\textbf{66} (2024), 185--197.
	
	\bibitem{SSM00}
	S.~Siltanen, J.~Mueller, and D.~Isaacson, An implementation of the
	reconstruction algorithm of {A}. {N}achman for the 2{D} inverse conductivity
	problem, \textit{Inverse Problems} \textbf{16} (2000), 681--699.
	
	\bibitem{SBZ23}
	B.~Sun, H.~Zhong, Y.~Zhao, L.~Ma, and H.~Wang, Calderon's method-guided deep
	neural network for electrical impedance tomography, \textit{IEEE Trans.
		Instrum. Meas.} \textbf{72} (2023).
	
	\bibitem{SJU87}
	J.~Sylvester and G.~Uhlmann, A global uniqueness theorem for an inverse
	boundary value problem, \textit{Ann. of Math. (2)} \textbf{125} (1987),
	153--169.
	
	\bibitem{TDN23}
	D.~N. Tanyu, J.~Ning, A.~Hauptmann, B.~Jin, and P.~Maass, \textit{Electrical
		impedance tomography: a fair comparative study on deep learning and
		analytic-based approaches}, 2025. 437--470.
	
	\bibitem{VMV98}
	M.~Vauhkonen, D.~Vadasz, P.~A. Karjalainen, E.~Somersalo, and J.~P. Kaipio,
	Tikhonov regularization and prior information in electrical impedance
	tomography, \textit{IEEE Trans. Med. Imaging} \textbf{17} (1998), 285--293.
	
	\bibitem{WZW20}
	Z.~Wang, E.~Wang, and Y.~Zhu, Image segmentation evaluation: a survey of
	methods, \textit{Artif. Intell. Rev.} \textbf{53} (2020), 5637--5674.
	
	\bibitem{WZL19}
	Z.~Wei, D.~Liu, and X.~Chen, Dominant-current deep learning scheme for
	electrical impedance tomography, \textit{IEEE Trans. Biomed. Eng.}
	\textbf{66} (2019), 2546--2555.
	
	\bibitem{ZBZ25}
	B.~Zhang, Y.~Zhang, Z.~Cheng, X.~Chen, and F.~Fu, Model-based deep unrolling
	framework for electrical impedance tomography image reconstruction,
	\textit{IEEE Trans. Instrum. Meas.} \textbf{74} (2025).
\end{thebibliography}
\end{document}